\newtheorem{Thm}{Theorem}
\newtheorem*{Thm*}{Theorem}
\newtheorem{Lem}{Lemma}
\newtheorem{Prop}{Proposition}
\newtheorem{Rem}{Remark}
\newcommand{\q}{\mathbb{Q}}
\newcommand{\qss}{\ensuremath{{\bf QI}_m}}
\newcommand{\QI}{\ensuremath{ Q_T^{~k,m}}}
\newcommand{\Q}[2]{\ensuremath{ Q_T^{~{#1},{#2}}}}
\newcommand{\ptl}[2]{\ensuremath{ \frac{\partial^{#1}}{\partial {#2}^{#1}}}}
\newcommand{\Qdown}{\ensuremath{ Q_T^{~n-i+k,m-1}}}
\newcommand{\ie} {{\it i.e.,\ }}
\newcommand{\sgn}{\mathrm{sgn}}
\newcommand{\limj}{\lim_{x_1 \to x_j}}
\newcommand{\ddtw}{{\partial \over \partial x_2}}
\newcommand{\ddtwm}{{\partial^m \over \partial x_2^m}}
\numberwithin{equation}{section}
\begin{document}
\title{A new characterization for the $m$-quasiinvariants \\ of $S_n$ and explicit basis for two row hook shapes}
\author{Jason Bandlow}
\author{Gregg Musiker}
\thanks{Research was supported in part by NSF grant DMS-0500557.}

\begin{abstract}
In 2002, Feigin and Veselov \cite{FV} defined the space of
$m$-quasiinvariants for any Coxeter group, building on earlier work of \cite{CV}.  While many properties of
those spaces were proven in \cite{EG, FV, FV2, GW2} from this definition, an explicit
computation of a basis was only done in certain cases.  In particular, in \cite{FV}, bases for 
$m$-quasiinvariants were computed for dihedral groups, including $S_3$, and Felder and Veselov \cite{FV2} 
also computed the non-symmetric $m$-quasiinvariants of lowest degree for general $S_n$.  
In this paper, we provide a new characterization of the $m$-quasiinvariants of
$S_n$, and use this to provide a basis for the isotypic component
indexed by the partition $[n-1,1]$.
This builds on a previous paper, \cite{QuasiI}, in which we
computed a basis for $S_3$ via combinatorial methods.

\end{abstract}
\maketitle

\tableofcontents

\section{Introduction}
A permutation $\sigma \in S_n$ acts on a polynomial in ${\bf R} =
\q[x_1,\dots,x_n]$ by permutation of indices: 
$$ \sigma P(x_1, \dots, x_n) = P(x_{\sigma(1)}, \dots, x_{\sigma(n)}). 
$$ 
The $S_n$-invariant polynomials are known as symmetric functions, and
denoted by $\Lambda_n$.  It is well known that $\Lambda_n$ is generated by
the elementary symmetric functions $\{e_1, \dots, e_n\}$ where $$ e_j
= \sum_{i_1 < i_2 < \dots < i_j}^{} x_{i_1} \dots x_{i_j}.  $$
The ring of coinvariants of $S_n$ is the quotient
$$ {\bf R} / \langle e_1, \dots, e_n \rangle. $$
As an $S_n$-module, the ring of coinvariants is known to be isomorphic
to the left regular representation.
It is also known that ${\bf R}$ is free over $\Lambda_n$ which implies
that if we choose a basis ${\mathcal B} = \{ b_1, \dots, b_{n!} \}$
for the ring of coinvariants, any element of $P \in {\bf R}$ has a
unique expansion
$$ P = \sum_{i = 1}^{n!} b_i f_i $$
where the $f_i$ are symmetric functions.  More information is
given by the Hilbert series for the isotypic component of ${\bf R}$
corresponding to $\lambda$, namely
$$ \frac{\sum_{T \in ST(\lambda)} f_\lambda~
q^{cocharge(T)}}{(1-q)(1-q^2)\dots(1-q^n)} .$$
Known bases for the ring of coinvariants with very combinatorial
descriptions include the Artin monomials and the descent monomials.

In \cite{CV,FV}, Chalykh, Feigin and Veselov introduced a generalization of invariance known as
``$m$-quasiinvariance''.  For the symmetric group the
$m$-quasiinvariants are the polynomials $P \in \q [x_1, \dots, x_n]$ which have the divisibility
property
 $$(x_i - x_j)^{2m+1} \biggm| \bigg( 1 - (i,j) \bigg) P$$ for every transposition $(i,j)$.
We set
$$\qss = \{ \textrm{$m$-quasiinvariants of $S_n$} \}.$$
The $m$-quasiinvariants of $S_n$ form a ring and an $S_n$ module, and
we have the following containments:
$$ {\bf R} = {\bf QI}_0 \supset {\bf QI}_1 \supset
\cdots \supset {\bf QI}_m \supset \cdots \supset \Lambda_n.$$
For all $m$, the ring of coinvariants $\qss / \langle e_1, \dots, e_n \rangle$
was conjectured in \cite{FV}, and proved in \cite{EG}, to be isomorphic as an $S_n$-module to the left regular
representation.  In fact, Etingof and Ginzburg further proved that $\qss$ is free over the symmetric functions.
The Hilbert series of the isotypic component indexed by $\lambda$
is given by \cite{FV2} to be  
\begin{align}
\frac{\sum_{T \in ST(\lambda)} f_\lambda~
q^{m\left( \binom{n}{2} -content(\lambda(T)) \right) +
cocharge(T)}}{(1-q)(1-q^2)\dots(1-q^n)} . \label{eq:Hilb}
\end{align}
Here $content$ and $cocharge$ are two statistics on tableaux--we will
not need the precise definitions.  In fact $content$ only depends on
the shape of $T$ hence it is actually a function on partitions.

In light of the simple combinatorial descriptions of a basis for the
coinvariants in the classical (or $m = 0$) case, the authors have
looked for a basis for larger $m$.  In \cite{QuasiI} and \cite{FV} a basis was given for
the case $n=3$.  (The work \cite{FV} specifically described the quasiinvariants for dihedral groups, so in particular for $D_3 \cong S_3$.)  
Further, in \cite{FV2}, Felder and Veselov provide integral expressions, $\phi^{(j)}(x)$ for $2 \leq j \leq n$,
for the lowest degree (non-symmetric) $m$-quasiinvariants, i.e. those 
of degree $mn+1$.  In the present work, we give a complete basis of the
isotypic component given by the partition $[n-1,1]$ for any $n$.  This is
accomplished by means of a new characterization of $\qss$:

\newtheorem*{ThmMain}{Theorem~\ref{Thm:Main}}
\begin{ThmMain}
The vector space of quasiinvariants has the following direct sum
decomposition:
\begin{align*}
  \qss = \bigoplus_{T \in ST(n)} \left( \gamma_T{\bf R} \cap V_T^{2m+1}{\bf R} \right)
\end{align*}
\end{ThmMain}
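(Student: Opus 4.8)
The plan is to derive the decomposition from three inputs: the classical splitting ${\bf R}=\bigoplus_{T\in ST(n)}\gamma_T{\bf R}$ of the polynomial ring into the pieces cut out by the idempotents $\gamma_T$; the fact that \qss\ is a submodule of ${\bf R}$ over the group algebra $\q[S_n]$; and a divisibility analysis of each piece $\gamma_T{\bf R}$. Since $\gamma_T\in\q[S_n]$ and \qss\ is $\q[S_n]$-stable, $\gamma_T$ maps \qss\ into itself, so for $P\in\qss$ the identity $P=\sum_T\gamma_T P$ writes $P$ as a sum of quasiinvariants lying in the respective summands; hence $\qss=\bigoplus_{T\in ST(n)}(\qss\cap\gamma_T{\bf R})$, and this is a direct sum because it refines ${\bf R}=\bigoplus_T\gamma_T{\bf R}$. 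It therefore suffices to prove, for each standard Young tableau $T$, that $\qss\cap\gamma_T{\bf R}=V_T^{2m+1}{\bf R}\cap\gamma_T{\bf R}$.

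One inclusion is short. By the construction of $\gamma_T$, every polynomial in $\gamma_T{\bf R}$ is antisymmetric under the column group of $T$; so if $v\in\qss\cap\gamma_T{\bf R}$ and $(i,j)$ transposes two entries lying in a common column of $T$, then $(1-(i,j))v=2v$, and quasiinvariance gives $(x_i-x_j)^{2m+1}\mid v$. As $\{i,j\}$ runs over all pairs in a common column the linear forms $x_i-x_j$ are pairwise non-associate, so the product of their $(2m+1)$st powers, namely $V_T^{2m+1}$, divides $v$ because ${\bf R}$ is a unique factorization domain. Thus $\qss\cap\gamma_T{\bf R}\subseteq V_T^{2m+1}{\bf R}\cap\gamma_T{\bf R}$.

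The reverse inclusion is the heart of the matter: given $v\in\gamma_T{\bf R}$ with $V_T^{2m+1}\mid v$, one must verify $(x_i-x_j)^{2m+1}\mid(1-(i,j))v$ for every transposition $(i,j)$, not only the column ones. I would argue by induction on $m$, the case $m=0$ being the decomposition ${\bf R}=\bigoplus_T\gamma_T{\bf R}$ together with $\gamma_T{\bf R}\subseteq V_T{\bf R}$. For the step, $V_T^{2m+1}\mid v$ forces $V_T^{2m-1}\mid v$, so the inductive hypothesis places $v$ in ${\bf QI}_{m-1}$; then $(1-(i,j))v$ is already divisible by $(x_i-x_j)^{2m-1}$ and, being antisymmetric in $x_i$ and $x_j$, involves only odd powers of $x_i-x_j$. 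Hence the step collapses to the single assertion that, for each transposition $(i,j)$, the coefficient of $(x_i-x_j)^{2m-1}$ in $(1-(i,j))v$ vanishes on the diagonal $x_i=x_j$. If $i,j$ share a column of $T$ this is automatic, since $(x_i-x_j)^{2m+1}\mid V_T^{2m+1}\mid v$ and $(1-(i,j))v=2v$. The remaining cases — $i,j$ in a common row, and $i,j$ in neither a common row nor a common column — are where the row-symmetrization built into $\gamma_T$ must be exploited: using that $(i,j)$ conjugates the column group of $T$ to the column group of $(i,j)T$, one rewrites $(1-(i,j))v$ through column-antisymmetrizations attached to $(i,j)T$, re-expresses those in terms of column-antisymmetrizations attached to $T$ by a Garnir-type straightening, and reads off the desired vanishing from $V_T^{2m+1}\mid v$. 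I expect this local computation for the ``skew'' transpositions to be the main obstacle.

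Combining the two inclusions over all $T\in ST(n)$ yields the asserted direct sum decomposition of \qss. As a consistency check — and as a possible alternative to the inductive verification of the hard inclusion — one may instead compare Hilbert series: the easy inclusion already gives $\qss\subseteq\bigoplus_T(V_T^{2m+1}{\bf R}\cap\gamma_T{\bf R})$, so, since the Hilbert series of \qss\ is given by \eqref{eq:Hilb}, it would be enough to show that passing from $\gamma_T{\bf R}$ to $\gamma_T{\bf R}\cap V_T^{2m+1}{\bf R}$ multiplies the Hilbert series by $q^{m(\binom{n}{2}-content(\lambda))}$, where $\lambda$ is the shape of $T$. That identity, however, is itself a statement about the structure of $\gamma_T{\bf R}$ as a module over the symmetric functions, and I expect it to need essentially the same combinatorial input as the direct argument.
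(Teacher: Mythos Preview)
Your overall architecture and the easy inclusion match the paper exactly. The gap is precisely where you flag it: the ``skew'' transpositions in the hard inclusion. Your induction on $m$ plus Garnir straightening is only a sketch; the reduction to a single vanishing coefficient does not obviously help, since that coefficient still depends on the full interaction of $\gamma_T$ with $V_T^{2m+1}$, and Garnir relations for $(i,j)T$ do not in any direct way yield divisibility statements for $v$ itself. As it stands, the proposal does not prove the theorem.

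The paper bypasses all of this with one group-algebra identity, and no induction on $m$. For $b$ an entry in column $j$ of $T$ and $i<j$, set
\[
\alpha_{i,b}=\sum_{c\in C_i}(c,b),
\]
the sum of transpositions swapping $b$ with each element of column $i$. The key lemma is $\alpha_{i,b}\,\gamma_T=\gamma_T$. Its proof is a short computation: one has $(1-\alpha_{i,b})[C_i]'=[C_i\cup\{b\}]'$ by the coset factorization of $[S_k]'$, and any signed symmetrizer $[C_i\cup\{b'\}]'$ over a set containing two entries from the same row of $T$ annihilates $P(T)$; pushing the remaining factor of $[C_j]'$ through produces exactly such terms.

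Granting this, take $v=\gamma_T v$ with $V_T^{2m+1}\mid v$, and let $a\in C_i$, $b\in C_j$ with $i<j$. Then
\[
(1-(a,b))v \;=\; \alpha_{i,b}v-(a,b)v \;=\; \sum_{\substack{c\in C_i\\ c\neq a}}(c,b)\,v.
\]
For each such $c$, the pair $\{c,a\}$ lies in column $i$, so $(x_c-x_a)^{2m+1}\mid V_T^{2m+1}\mid v$; applying $(c,b)$ turns this factor into $(x_b-x_a)^{2m+1}$. Hence every summand, and therefore $(1-(a,b))v$, is divisible by $(x_a-x_b)^{2m+1}$. This handles all non-column transpositions at once and completes the hard inclusion without induction and without invoking the Hilbert series.
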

\noindent where $ST(n)$ is the set of standard tableaux of size $n$, $\gamma_T$
is a projection opertor due to Young (defined in full detail in the
next section) and $V_T$ is the polynomial given by the product over
the columns of $T$ of the associated ``Vandermonde determinants''
(this is also defined in detail below).  This characterization is
proved using completely elementary methods (namely, computations in
the group algebra of the symmetric group) in
section~\ref{Sec:NewChar}.  In section~\ref{lequals2} we use this
characterization to construct the basis for the
$[n-1,1]$ isotypic component.  Precisely, for $T$ a standard Young
tableau of shape $[n-1,1]$ with $j$ the entry in the second row, we
set
$$Q_T^{k,m} = \int_{x_1}^{x_j} t^k \prod_{i=1}^n (t-x_i)^m dt.$$
With this definition, we have
\newtheorem*{ThmBasis}{Theorem~\ref{Thm:Basis}}
\begin{ThmBasis}
The set 
$$\{Q_T^{0,m}, Q_T^{1,m}, Q_T^{2,m}, \dots, Q_T^{n-2,m}\}
$$ 
is a basis for $\gamma_T \left( \qss / \langle e_1, \dots, e_n \rangle
\right)$.
\end{ThmBasis}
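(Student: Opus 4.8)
The plan is to verify three things about the set $\{Q_T^{0,m},\dots,Q_T^{n-2,m}\}$: that each $Q_T^{k,m}$ genuinely lies in $\gamma_T(\qss)$, that these $n-1$ elements are linearly independent modulo $\langle e_1,\dots,e_n\rangle$, and that $n-1$ is the right number, i.e.\ it matches the dimension of $\gamma_T(\qss/\langle e_1,\dots,e_n\rangle)$ predicted by the Hilbert series \eqref{eq:Hilb}. The counting step is the easiest: for $\lambda=[n-1,1]$ the Hilbert series numerator sums over the $n-1$ standard tableaux of that shape, but after projecting by a single $\gamma_T$ we land in the $\gamma_T$-isotypic line, so the relevant Hilbert series is the single term $q^{m(\binom{n}{2}-\mathrm{content}(\lambda)) + \mathrm{cocharge}(T)}/\prod(1-q^i)$; expanding this against the symmetric functions and comparing with the degrees $\deg Q_T^{k,m} = mn+1+k$ for $k=0,\dots,n-2$ should show exactly $n-1$ basis elements are needed in each relevant degree range, so a spanning set of that size is automatically a basis.

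For membership, I would first check the quasiinvariance divisibility condition directly on $Q_T^{k,m}=\int_{x_1}^{x_j} t^k\prod_i(t-x_i)^m\,dt$. Applying a transposition $(a,b)$ either fixes the integrand (since $\prod_i(t-x_i)^m$ is symmetric) and permutes the limits, or fixes everything; the only interesting transpositions are those involving $x_1$ or $x_j$. For $(1,j)$ one gets $\bigl(1-(1,j)\bigr)Q_T^{k,m} = 2\int_{x_1}^{x_j} t^k\prod_i(t-x_i)^m\,dt$, and one must show $(x_1-x_j)^{2m+1}$ divides this; this follows because the integrand vanishes to order $m$ at both endpoints $t=x_1$ and $t=x_j$, so the antiderivative vanishes to order $m+1$ at each, giving a factor $(x_1-x_j)^{m+1}$ from the upper endpoint substitution and, after an expansion, the full $(x_1-x_j)^{2m+1}$ — this is exactly the kind of endpoint Taylor-expansion argument that the operators \ddj, \ddjm\ in the preamble are set up to run. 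For the other transpositions one checks the difference is likewise divisible, or directly that $Q_T^{k,m}$ is already $\gamma_T$-symmetric in the appropriate variables. Then I would show $Q_T^{k,m}$ actually lies in the image of $\gamma_T$ by exhibiting it (up to a nonzero scalar) as $\gamma_T$ applied to some explicit polynomial, using the Young symmetrizer description of $\gamma_T$ from Section~\ref{Sec:NewChar} and the Main Theorem's decomposition $\qss=\bigoplus_T(\gamma_T{\bf R}\cap V_T^{2m+1}{\bf R})$; since $V_T$ for a hook $[n-1,1]$ is essentially a single Vandermonde-type factor $\prod_{i\ne 1,j}(x_?-x_?)$ times the simple factor from the two-box column, the condition $Q_T^{k,m}\in V_T^{2m+1}{\bf R}$ reduces to the same endpoint divisibility just established.

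Linear independence modulo the ideal is where the real work is, and I expect it to be the main obstacle. A clean way is to compute the $Q_T^{k,m}$ explicitly: expanding $\prod_i(t-x_i)^m$ and integrating term by term writes $Q_T^{k,m}$ as a polynomial whose top-degree part (in a suitable term order, say with $x_j$ largest) is a nonzero monomial multiple of $x_j^{mn+1+k}$ plus lower terms, or more robustly, one shows the leading term of $Q_T^{k,m}$ in the variable $x_j$ after reducing modulo the symmetric polynomials is $\binom{?}{?}x_j$-power that is not killed. Concretely I would argue: suppose $\sum_k c_k Q_T^{k,m} \equiv 0 \pmod{\langle e_1,\dots,e_n\rangle}$; take the largest $k$ with $c_k\ne0$ and look at the image in the degree-$(mn+1+k)$ part of the coinvariant ring, where a dimension/leading-term comparison forces $c_k=0$. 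The cleanest execution uses that, modulo $\langle e_1,\dots,e_n\rangle$, the power sums (or the $x_j^N$ reduced via Newton's identities) span the relevant piece, and the family $\{Q_T^{k,m}\}_k$ reduces to a triangular system in these; establishing that triangularity is the technical heart. As a fallback, one can evaluate at a generic point on the hyperplane arrangement or differentiate $Q_T^{k,m}$ and show the resulting $(n-1)\times(n-1)$ matrix of coefficients is nonsingular (its determinant should be a product of differences of the exponents $mn+1, mn+2,\dots$, manifestly nonzero).
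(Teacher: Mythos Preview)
Your three-part outline (membership, Hilbert-series count, independence modulo the ideal) matches the paper's, and your membership argument is close in spirit: the paper too invokes Theorem~\ref{Thm:Main} and reduces to checking $\gamma_T Q_T^{k,m}=Q_T^{k,m}$ together with $(x_1-x_j)^{2m+1}\mid Q_T^{k,m}$. A small correction: for shape $[n-1,1]$ the only column with two entries is the first, so $V_T=x_1-x_j$ is a \emph{single} linear factor, not a larger Vandermonde product. Your endpoint-vanishing idea for divisibility is essentially the paper's repeated L'H\^opital/Leibniz computation. For the condition $\gamma_T Q=Q$, however, you give no actual method; the paper handles this by a direct calculation at $m=0$ plus the recursion $Q_T^{k,m}=\sum_i(-1)^ie_i\,Q_T^{n-i+k,m-1}$ and induction on $m$ (using that symmetric functions commute past $\gamma_T$).

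The real gap is linear independence. Since the $Q_T^{k,m}$ have distinct degrees, the issue is precisely whether each $Q_T^{k,m}$ is nonzero in the quotient, i.e.\ not a positive-degree symmetric combination of the lower $Q_T^{j,m}$. Your three suggested attacks (leading term in $x_j$, triangularity against power sums, generic evaluation or a Jacobian) are heuristic, and you give no mechanism for any of them to detect nonvanishing modulo $\langle e_1,\dots,e_n\rangle$. The ingredient you are missing, and which the paper supplies, is a sharpening of the divisibility to an \emph{exact} limit:
\[
\lim_{x_1\to x_j}\frac{Q_T^{k,m}}{(x_j-x_1)^{2m+1}}
=\frac{(-1)^m m!^2}{(2m+1)!}\,x_j^{\,k}\prod_{i\ne 1,j}(x_j-x_i)^m,
\]
in which the dependence on $k$ is the clean factor $x_j^{\,k}$ times a common nonzero factor. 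This formula does double duty: polynomiality of the limit gives the required divisibility, and the clean $k$-dependence drives independence. Concretely, if $Q_T^{k,m}+A_1Q_T^{k-1,m}+\dots+A_kQ_T^{0,m}=0$ with $A_i$ symmetric of degree $i$, then dividing by $(x_j-x_1)^{2m+1}$ and sending $x_j\to x_1$ (say $j=2$) yields $x_1^k+\widetilde A_1x_1^{k-1}+\dots+\widetilde A_k=0$, so $(x_2-x_1)$ divides $x_1^k+A_1x_1^{k-1}+\dots+A_k$; by the symmetry of this expression in $x_2,\dots,x_n$, so does $\prod_{i=2}^n(x_i-x_1)$, forcing $k\ge n-1$. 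This limit argument is the step your proposal does not supply.
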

In section~\ref{Sec:EvalIntegrals}  we evaluate the integrals that represent these polynomials in a more explicit form.

Along the journey to these results, the authors discovered other
interesting facts about the ring $\qss$.  In
section~\ref{Sec:LmAction}, we show that the operator $$L_m =
\sum_{i=1}^n \frac{\partial^2}{\partial x_i^2} - 2m \sum_{1 \leq i <
j \leq n} \frac{1}{x_i - x_j} \left( \ptl{}{x_i} - \ptl{}{x_j}
\right)$$
acts on our basis by the simple formula
$$ L_m Q_T^{k,m} = k(k-1) Q_T^{k-2,m}. $$  Finally, in
section~\ref{Sec:ChangeOfBasis} we show that if we think of
${\bf QI}_{m+1}$ and $\qss$ as modules over the ring $\Lambda_n$, the
determinant of the respective change of basis matrix is the
Vandermonde determinant to the power $n!$, regardless of the value of
$m$. We hope that these results prove as suggestive to others as to
ourselves, and spur further investigations into this newly discovered
territory.

\section{Definitions and Notation} \label{Sec:DefAndNot}

Throughout this paper, we will write elements of the symmetric group
$S_n$ using cycle notation.  We will perform many calculations in the
group algebra of $S_n$, and as such it will be useful to have
shorthand notation for many commonly occurring elements. For a given
subgroup $A$ of $S_n$, we set
\begin{align*}
  [A] &= \sum_{\sigma \in A} \sigma \qquad \text{ and }\\
  [A]' &= \sum_{\sigma \in A} \sgn(\sigma)\sigma.
\end{align*}
We will extend this notation, abusing it slightly, and also define,
for any set $U$ whatsoever,
\begin{align*}
  [U] &= \sum_{\sigma \in S_U} \sigma \qquad \text{ and }\\
  [U]' &= \sum_{\sigma \in S_U} \sgn(\sigma)\sigma.
\end{align*}

The Young diagram of a partition $\lambda$ is a subset of the boxes in
the positive integer lattice, indexed by ordered pairs $(i,j)$, where
$i$ is the row index and $j$ is the column index.  For example, in the
following Young diagram of $[4,3,2]$, the cell $(2,3)$ is marked:
\[
\young(\hfil \hfil,\hfil \hfil \bullet,\hfil \hfil \hfil \hfil)
\]

A \emph{tableau} of shape $\lambda \vdash n$ is a function from the
cells of the Young diagram of $\lambda$ to the set $\{1, \dots,
n\}$.  We write the $T(i,j)$ for the value of $T$ at the cell $(i,j)$.
For example, if $T$ is the following tableau, $T(2,3) = 8$:
\[
\young(67,458,1239)
\]

We call a tableau \emph{standard} if it is injective and the entries
increase across the rows and up the columns.  For example, the tableau
above is standard.  We denote the set of standard tableaux of shape
$\lambda$ by $ST(\lambda)$ and the set of all standard tableaux with
$n$ boxes by $ST(n)$.

Given a tableau $T$ we let $C_i$ be the set of elements in the
$i^{th}$ column and we define $R_i$ similarly for the rows.  We also
set
\begin{align*}
C(T) &= \{ (i,j) \in S_n \mid i,j \textrm{ are in the same column of }
T \}\\
R(T) &= \{ (i,j) \in S_n \mid i,j \textrm{ are in the same row of } T
\} \\
N(T) &= \prod_i [C_i]' \\
P(T) &= \prod_i [R_i] \\
f_{\lambda} &= \textrm{ the number of standard tableaux of shape $\lambda$} \\
\gamma_T &= \frac{f_\lambda~N(T) P(T)}{n!} \\
\lambda(T) &=\textrm{ the shape of tableau } T. 
\end{align*}
Finally, we define the following useful polynomial associated with a
tableau $T$:
\begin{align*}
V_T &= \prod_{(i,j) \in C(T)} (x_i - x_j).
\end{align*}

\section{Useful Facts About $\q S_n$ modules} \label{Sec:QSnMods}

The fundamental theorem of representation theory states
\begin{Prop} For $W$ a finite dimensional $S_n$-module,
\begin{align*}
W \cong \bigoplus_{\lambda \vdash n} V_{\lambda}^{\oplus m_\lambda}
\end{align*}
where the $V_\lambda$ are the irreducible representations of $S_n$ and
the $m_\lambda$ are non-negative integers.
\end{Prop}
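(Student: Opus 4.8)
The plan is to obtain this from two classical ingredients: Maschke's theorem, which yields complete reducibility over a field of characteristic zero, and the classification of the irreducible $\mathbb{Q}S_n$-modules by partitions of $n$.

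First I would establish complete reducibility. Given a $\mathbb{Q}S_n$-submodule $U \subseteq W$, choose any $\mathbb{Q}$-linear projection $\pi\colon W \to U$ (forgetting the group action) and average it over the group, setting $\bar\pi = \frac{1}{n!}\sum_{\sigma \in S_n} \sigma\,\pi\,\sigma^{-1}$; this makes sense precisely because $\frac{1}{n!}\in\mathbb{Q}$. A short verification shows that $\bar\pi$ is an $S_n$-equivariant idempotent with image $U$, so $\ker\bar\pi$ is an $S_n$-invariant complement and $W = U \oplus \ker\bar\pi$. Inducting on $\dim_{\mathbb{Q}} W$, one peels off irreducible submodules one at a time and writes $W$ as an internal direct sum of irreducible $\mathbb{Q}S_n$-submodules.

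Second I would invoke the theory of Specht modules $S^\lambda$, $\lambda\vdash n$: these are defined over $\mathbb{Q}$ (in fact over $\mathbb{Z}$), are pairwise non-isomorphic, are irreducible in characteristic zero, and every irreducible $\mathbb{Q}S_n$-module is isomorphic to exactly one $S^\lambda$. Setting $V_\lambda = S^\lambda$, each irreducible summand produced above is isomorphic to some $V_\lambda$; collecting the summands isomorphic to a given $V_\lambda$ and letting $m_\lambda$ denote their number gives the asserted decomposition. (One also has $m_\lambda = \dim_{\mathbb{Q}}\operatorname{Hom}_{S_n}(V_\lambda,W)$ by Schur's lemma, so the multiplicities are in fact independent of the chosen decomposition, though the statement as phrased does not require this.)

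The only genuine subtlety — hence the "main obstacle", such as it is — is the rationality input: Maschke plus Schur already give a decomposition into irreducibles over any characteristic-zero field, but to conclude that over $\mathbb{Q}$ the irreducibles are still indexed by all partitions of $n$ (i.e.\ that $\mathbb{Q}$ is already a splitting field for $S_n$, with no field extension needed) is exactly the content of Young's natural representations / the Specht module theory. I would cite this standard fact rather than reprove it; everything else in the argument is the routine averaging computation above.
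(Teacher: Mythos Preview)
Your proposal is correct and complete as a proof outline; the averaging argument for Maschke and the invocation of Specht modules are exactly the standard ingredients, and your remark about $\mathbb{Q}$ being a splitting field for $S_n$ is precisely the point that makes the partition indexing work over the rationals rather than only over $\mathbb{C}$.

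That said, the paper does not actually prove this proposition at all: it is introduced with the phrase ``The fundamental theorem of representation theory states'' and left as a black box, with no argument given. So there is no comparison of approaches to make --- you have supplied a proof where the paper supplies none. If anything, your write-up is more than the paper requires; in context you could simply cite Maschke's theorem and Young's classification and move on.
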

The vector space and $S_n$-module $V_\lambda^{\oplus m_\lambda}$ is
known as the \emph{isotypic component} of $V$ indexed by $\lambda$.
Now, \qss is infinite dimensional, but it is the direct sum of
homogeneous components, each of which are finite dimensional.  So we
have that each homogeneous component of \qss decomposes into the
direct sum of irreducibles.  The direct sum of all copies of
$V_\lambda$ occuring in this decomposition is still itself an 
$S_n$-module, and is still referred to as the isotypic component indexed by
$\lambda$.  However, we will find the following decomposition of $V$
more useful.
\begin{Prop}
  On any $S_n$ module $W$, the group algebra elements $\{\gamma_T\}_{T
  \in ST(n)}$ act as projection operators.  In symbols, we have the conditions
\begin{enumerate}
    \item{$\gamma_T^2 = \gamma_T$}
    \item{$W = \bigoplus_{T \in ST(n)} \gamma_T W$}.
\end{enumerate}
\end{Prop}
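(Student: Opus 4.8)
The plan is to establish both claims by reducing everything to the classical theory of Young symmetrizers in $\q S_n$ and the decomposition of the left regular representation. Recall that for a standard tableau $T$ of shape $\lambda \vdash n$, the element $e_T = N(T)P(T)$ is (up to scalar) a Young symmetrizer: a classical computation shows $e_T^2 = \frac{n!}{f_\lambda} e_T$, so that $\gamma_T = \frac{f_\lambda}{n!} N(T)P(T)$ satisfies $\gamma_T^2 = \gamma_T$, which is claim (1). Rather than reprove the scalar identity from scratch, I would cite it as the standard normalization of the Young symmetrizer; the only thing worth spelling out is that our $\gamma_T$ is $P(T)$ on the \emph{right} of $N(T)$ (some sources use the opposite order), so I would fix the convention once and check the idempotent scaling is unaffected, since transpose-of-tableau symmetry interchanges the two orders.

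For claim (2), the key classical input is that $\sum_{T \in ST(n)} \gamma_T = 1$ in $\q S_n$ and that the $\gamma_T$ are \emph{orthogonal} idempotents, i.e. $\gamma_S \gamma_T = 0$ whenever $S \neq T$. Orthogonality for tableaux of \emph{different} shapes is immediate because $N(T)P(T)$ generates a submodule of the regular representation isomorphic to copies of $V_{\lambda(T)}$ only, and $\mathrm{Hom}_{S_n}(V_\lambda, V_\mu) = 0$ for $\lambda \neq \mu$. Orthogonality for distinct tableaux of the \emph{same} shape is the more delicate point and is exactly where I expect the main obstacle to lie: one shows $P(S)\,N(T) = 0$ when $S \neq T$ have the same shape, using the combinatorial lemma that two tableaux of the same shape either share two entries in a common row of one and a common column of the other (forcing a transposition $\tau$ with $P(S)\tau = P(S)$ and $\tau N(T) = -N(T)$, hence $P(S)N(T) = -P(S)N(T) = 0$), or else are related by a row-permutation-times-column-permutation, which for \emph{standard} tableaux can only happen when $S = T$. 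Since these are statements purely in $\q S_n$ and do not reference $W$ at all, once they are in hand the module statement follows formally.

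Given $\sum_T \gamma_T = 1$ and $\gamma_S\gamma_T = \delta_{S,T}\gamma_T$, the decomposition of an arbitrary $S_n$-module $W$ is pure algebra: for any $w \in W$ we have $w = 1 \cdot w = \sum_T \gamma_T w$, so $W = \sum_T \gamma_T W$; and if $w \in \gamma_S W \cap \sum_{T \neq S}\gamma_T W$, writing $w = \gamma_S w'$ and $w = \sum_{T\neq S}\gamma_T w_T$ and applying $\gamma_S$ gives $w = \gamma_S^2 w' = \gamma_S w = \sum_{T\neq S}\gamma_S\gamma_T w_T = 0$, so the sum is direct. This last paragraph is routine and I would present it compactly. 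In summary, the real content is the group-algebra identities about the $\gamma_T$ — their idempotency, that they sum to $1$, and their mutual orthogonality — all of which are classical facts about Young symmetrizers; the proposition is then obtained by applying these identities inside the given module $W$.
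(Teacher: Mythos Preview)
The paper records this proposition in Section~\ref{Sec:QSnMods} as standard background and gives no proof, so there is nothing in the paper to compare against; your sketch has to stand on its own. Part~(1) and the final deduction of the direct sum from orthogonality together with $\sum_T\gamma_T=1$ are fine. The gap is exactly where you flagged it: your combinatorial justification that $P(S)N(T)=0$ for \emph{every} pair of distinct standard $S\ne T$ of the same shape does not go through. The dichotomy you quote is correct, but its second branch --- that $S$ and $T$ are related by a row-permutation of one times a column-permutation of the other --- does \emph{not} force $S=T$ when both are standard. For a concrete counterexample take $\lambda=[3,2]$, let $T$ have rows $\{1,2,3\},\{4,5\}$ and $S$ have rows $\{1,3,5\},\{2,4\}$: the row-pairs of $S$ are $(1,3),(1,5),(3,5),(2,4)$ while the column-pairs of $T$ are $(1,4),(2,5)$, and these lists are disjoint, so no pair lies in a row of $S$ and a column of $T$; yet $S\ne T$ are both standard, and indeed $pS=qT$ with $p=(3\,5)(2\,4)\in R(S)$ and $q=(2\,5)\in C(T)$. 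Thus the argument as written does not establish $P(S)N(T)=0$ in this direction.

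What the combinatorial lemma genuinely delivers is only a one-sided (triangular) vanishing with respect to a suitable total order on standard tableaux, not full orthogonality. To complete the proof you need either an independent argument that $\sum_T\gamma_T=1$ --- and note that over $\q$ this is actually \emph{equivalent} to full orthogonality, since any finite family of idempotent operators on a finite-dimensional space summing to the identity is automatically pairwise orthogonal by a rank/trace count, so neither assertion can be taken for granted without the other --- or else a route that bypasses both. One such route: use the triangular vanishing to obtain the right-ideal decomposition $\q S_n=\bigoplus_T\gamma_T\,\q S_n$, write $1=\sum_T\eta_T$ with $\eta_T\in\gamma_T\,\q S_n$, and observe that for any $W$ one has $w=\sum_T\eta_Tw$ with $\eta_Tw\in\gamma_TW$; directness then follows from the dimension identity $\dim(\gamma_TW)=\dim\mathrm{Hom}_{S_n}(\q S_n\gamma_T,W)=m_{\lambda(T)}$, which gives $\sum_T\dim(\gamma_TW)=\dim W$.
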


Note that in this decomposition, unlike the previous one, the direct
summands are not themselves $S_n$-modules.  We do have the
following proposition, however, nicely relating the previous two.

\begin{Prop}
  For any $S_n$ module $W$,
  $$\bigoplus_{T \in ST(\lambda)} \gamma_T W$$
  is the isotypic component of $W$ indexed by $\lambda$.
\end{Prop}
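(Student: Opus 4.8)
The plan is to reduce the statement to standard facts about Young's seminormal/orthogonal idempotents, which are essentially encoded in the operators $\gamma_T$. First I would recall why the $\gamma_T$ are projections in the first place: for a standard tableau $T$ of shape $\lambda$, the element $e_T = N(T)P(T)$ is a scalar multiple of a primitive idempotent in $\q S_n$, with $e_T^2 = (n!/f_\lambda) e_T$, so that $\gamma_T = (f_\lambda/n!)\,e_T$ satisfies $\gamma_T^2 = \gamma_T$. This is the content of the second Proposition, which I am allowed to assume; but I will need the finer structure, namely that $\gamma_T \q S_n$ is isomorphic as a left $S_n$-module to $V_\lambda$ (the irreducible indexed by $\lambda$), and that $\q S_n = \bigoplus_{T \in ST(n)} \gamma_T \q S_n$ refines the Wedderburn decomposition $\q S_n = \bigoplus_{\lambda \vdash n} (\q S_n) c_\lambda$ into a sum of minimal left ideals.

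Next I would translate this into a statement about $W$. For any left $S_n$-module $W$ and any idempotent $e$, the subspace $eW$ is naturally identified with $\mathrm{Hom}_{S_n}((\q S_n)e, W)$ via $\phi \mapsto \phi(e)$; since $(\q S_n)\gamma_T \cong V_\lambda$, we get $\dim \gamma_T W = \dim \mathrm{Hom}_{S_n}(V_\lambda, W) = m_\lambda$, the multiplicity of $V_\lambda$ in $W$. Summing over the $f_\lambda$ standard tableaux of shape $\lambda$, the subspace $\bigoplus_{T \in ST(\lambda)} \gamma_T W$ has dimension $f_\lambda\, m_\lambda = \dim V_\lambda^{\oplus m_\lambda}$, which matches the dimension of the isotypic component. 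So the strategy is: show this subspace is \emph{contained} in the isotypic component, then conclude by the dimension count.

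For the containment, I would argue as follows. Let $z_\lambda = \sum_{T \in ST(\lambda)} \gamma_T$. Because the $\gamma_T$ for fixed $\lambda$ are mutually orthogonal idempotents (orthogonality across different tableaux — including different shapes — is part of the structure behind the second Proposition's direct-sum claim), $z_\lambda$ is itself an idempotent, and in fact $z_\lambda$ is the central idempotent of $\q S_n$ projecting onto the $\lambda$-isotypic block: $z_\lambda = \frac{f_\lambda}{n!}\sum_{\sigma} \chi^\lambda(\sigma^{-1})\sigma$. (One way to see $z_\lambda$ is central: $\q S_n z_\lambda$ is, by the refined Wedderburn decomposition, the sum of all minimal left ideals isomorphic to $V_\lambda$, which is the two-sided ideal $\q S_n c_\lambda \q S_n$; the identity element of a two-sided ideal summand of a semisimple algebra is a central idempotent.) Granting $z_\lambda$ central, $z_\lambda W$ is an $S_n$-submodule, it is annihilated by every $z_\mu$ with $\mu \neq \lambda$, and $z_\lambda$ acts as the identity on any copy of $V_\lambda$; hence $z_\lambda W$ is exactly the $\lambda$-isotypic component. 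Finally $\bigoplus_{T \in ST(\lambda)}\gamma_T W = z_\lambda W$ because the $\gamma_T$, $T \in ST(\lambda)$, are orthogonal idempotents summing to $z_\lambda$. This finishes the proof.

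The main obstacle is the identification $z_\lambda = \sum_{T \in ST(\lambda)}\gamma_T$ with the central primitive idempotent of the $\lambda$-block — equivalently, verifying mutual orthogonality $\gamma_S \gamma_T = 0$ for $S \neq T$ (even when $S, T$ have the same shape) and computing $\sum_T \gamma_T \gamma = \chi^\lambda$-weighted sum. If the earlier Proposition's direct-sum decomposition $W = \bigoplus_{T}\gamma_T W$ is taken as given with its implicit orthogonality, this is quick; otherwise one must invoke the standard theory of Young symmetrizers (e.g. that $\gamma_S \q S_n \gamma_T$ is spanned by a single ``Garnir''-type element which vanishes unless $S$ and $T$ are related by a row/column permutation, forcing $S = T$ for standard tableaux). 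I would structure the write-up to lean on whichever of these is cleanest given what has already been established, and relegate the orthogonality bookkeeping to a citation of the classical representation theory of $S_n$.
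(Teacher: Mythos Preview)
The paper does not prove this proposition. Section~\ref{Sec:QSnMods} presents Propositions~1 through~4 as background facts about $\q S_n$-modules and supplies no argument for any of them; Proposition~3 is simply quoted as a standard result. So there is nothing in the paper to compare your proposal against.

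That said, your outline is correct and is essentially the textbook argument. In fact the cleanest version is already contained in what you wrote before you reached the ``main obstacle'': granting Proposition~2 (so that the sum $\bigoplus_{T\in ST(\lambda)}\gamma_T W$ is direct), the identification $\gamma_T W \cong \mathrm{Hom}_{\q S_n}\big((\q S_n)\gamma_T,\, W\big)$ together with $(\q S_n)\gamma_T \cong V_\lambda$ gives $\dim \gamma_T W = m_\lambda$, while the containment $\gamma_T W \subseteq W_\lambda$ holds because the $S_n$-module generated by $\gamma_T W$ is a sum of homomorphic images of $(\q S_n)\gamma_T\cong V_\lambda$. A dimension count then forces equality, and you never actually need to identify $\sum_{T}\gamma_T$ with the central idempotent $z_\lambda$.

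Regarding the orthogonality you flag as the obstacle: with the paper's convention $\gamma_T \propto N(T)P(T)$ one does in fact have $\gamma_S\gamma_T = 0$ for distinct standard $S,T$. The point is the classical combinatorial lemma that for distinct standard tableaux $S,T$ of the same shape there is always a pair of entries lying in a common row of $S$ and a common column of $T$; writing $(a,b)$ for their transposition, $P(S)(a,b)=P(S)$ and $(a,b)N(T)=-N(T)$ force $P(S)N(T)=0$, hence $\gamma_S\gamma_T=0$. (For different shapes the vanishing is even easier.) So the obstacle is milder than you suggest, and your parenthetical worry about ``Garnir-type'' bookkeeping is exactly the right ingredient, not a gap.
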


In the case of the quasiinvariants, we have the following
\begin{Prop} \label{Prop:directsum} 
  The $\q$-vector space of $m$-quasiinvariants has the following
  direct sum decomposition:
  \begin{align*}
    \qss = \bigoplus_{T \in ST(n)} \gamma_T \qss.
  \end{align*}
\end{Prop}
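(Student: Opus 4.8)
The statement to prove is Proposition~\ref{Prop:directsum}: that $\qss = \bigoplus_{T \in ST(n)} \gamma_T \qss$. The plan is to deduce this essentially immediately from the preceding proposition, which asserts that on \emph{any} $S_n$-module $W$ one has the direct sum decomposition $W = \bigoplus_{T \in ST(n)} \gamma_T W$. The only subtlety is that $\qss$ is infinite-dimensional, so I first need to make sure the general proposition applies. The cleanest route is to observe that $\qss$ is a graded $S_n$-submodule of ${\bf R} = \q[x_1,\dots,x_n]$: the divisibility condition $(x_i-x_j)^{2m+1} \mid (1-(i,j))P$ is homogeneous, so $\qss = \bigoplus_{d \geq 0} \qss^{(d)}$ where $\qss^{(d)}$ is the (finite-dimensional) degree-$d$ piece. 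Each $\qss^{(d)}$ is an honest finite-dimensional $S_n$-module, so the general proposition gives $\qss^{(d)} = \bigoplus_{T} \gamma_T \qss^{(d)}$. Taking the direct sum over $d$ and using that $\gamma_T$ is degree-preserving (it lies in $\q S_n$, which acts without changing degree) yields $\gamma_T \qss = \bigoplus_d \gamma_T \qss^{(d)}$, and hence $\qss = \bigoplus_d \bigoplus_T \gamma_T \qss^{(d)} = \bigoplus_T \gamma_T \qss$.

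Carrying this out, the key steps in order are: (1) record that $\qss$ is a homogeneous $S_n$-submodule of ${\bf R}$, so it splits as a direct sum of its finite-dimensional graded pieces $\qss^{(d)}$, each stable under the $S_n$-action; (2) apply the previous proposition to each $\qss^{(d)}$ to get $\qss^{(d)} = \bigoplus_{T \in ST(n)} \gamma_T \qss^{(d)}$; (3) note $\gamma_T$ acts degree-wise, so summing over $d$ commutes with applying $\gamma_T$ and with taking the internal direct sum over $T$; (4) conclude. Step (1) is where one must be slightly careful to phrase the reduction to the finite-dimensional case correctly, but it is routine since we already observed in the paragraph preceding the proposition that each homogeneous component of $\qss$ decomposes into irreducibles — the same reasoning shows the $\gamma_T$-decomposition descends to each graded piece.

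I do not expect any real obstacle here; this proposition is a corollary packaged for emphasis, since it is the form of the decomposition that the main theorem will refine (by identifying $\gamma_T \qss$ with $\gamma_T {\bf R} \cap V_T^{2m+1}{\bf R}$). The one thing to be careful about is not to claim the summands $\gamma_T \qss$ are $S_n$-submodules — they are not, as the remark after the second proposition points out — so the proof should only invoke the $S_n$-module structure of $\qss$ itself (and of its graded pieces), and then treat the $\gamma_T$ purely as a complete set of orthogonal idempotents in $\q S_n$ acting $\q$-linearly.
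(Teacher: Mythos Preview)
The paper does not actually supply a proof of this proposition; it appears in Section~\ref{Sec:QSnMods} among a list of ``Useful Facts'' stated without proof, and is evidently meant as nothing more than the specialization of the immediately preceding proposition (which asserts $W = \bigoplus_{T} \gamma_T W$ for \emph{any} $S_n$-module $W$) to the case $W = \qss$. Your argument is correct and makes this explicit; the extra care you take in passing through the finite-dimensional graded pieces is arguably unnecessary, since the preceding proposition is stated for arbitrary $S_n$-modules and the idempotent identities underlying it hold in $\q S_n$ independently of any dimension hypothesis, but it certainly does no harm.
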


Our goal will be to use the decomposition $\qss / \langle e_1, \dots
e_n \rangle = \bigoplus_T \gamma_T \left( \qss / \langle e_1, \dots
e_n \rangle \right)$ to find a basis for this quotient module.

\section{A New Characterization of $S_n$-Quasiinvariants}
\label{Sec:NewChar}

In this section we prove the following theorem:
\begin{Thm} \label{Thm:Main} 
The vector space of quasiinvariants has the following direct sum
decomposition:
\begin{align*}
  \qss = \bigoplus_{T \in ST(n)} \left( \gamma_T{\bf R} \cap
  V_T^{2m+1}{\bf R} \right).
\end{align*}
\end{Thm}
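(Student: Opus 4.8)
The plan is to combine Proposition~\ref{Prop:directsum} with a tableau‑by‑tableau comparison. Since $\qss=\bigoplus_{T\in ST(n)}\gamma_T\qss$, and since the subspaces $\gamma_T{\bf R}$ already sit in direct position inside ${\bf R}$ (so that any collection of subspaces $U_T\subseteq\gamma_T{\bf R}$ automatically sums directly), it suffices to prove, for each fixed $T\in ST(n)$, that $\gamma_T\qss=\gamma_T{\bf R}\cap V_T^{2m+1}{\bf R}$. Because $\qss$ is an $S_n$‑submodule of ${\bf R}$ and $\gamma_T^2=\gamma_T$, we have $\gamma_T\qss=\qss\cap\gamma_T{\bf R}$: the inclusion $\subseteq$ is clear, and conversely if $P\in\qss\cap\gamma_T{\bf R}$ then $P=\gamma_TP\in\gamma_T\qss$. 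So the whole theorem reduces to the statement that, for $P\in\gamma_T{\bf R}$, one has $P\in\qss$ if and only if $V_T^{2m+1}\mid P$.

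The structural input I would isolate first is that every $P\in\gamma_T{\bf R}$ is antisymmetric under the column group of $T$: if $i$ and $j$ lie in a common column of $T$ then $(i,j)N(T)=-N(T)$, hence $(i,j)\gamma_T=-\gamma_T$ and so $(i,j)P=-P$. Consequently, for such a \emph{vertical} transposition $(1-(i,j))P=2P$; moreover the linear forms $x_i-x_j$ indexed by the vertical pairs of $T$ are pairwise coprime, with product equal up to sign to $V_T$. The ``only if'' direction is then immediate: if $P\in\gamma_T{\bf R}$ is an $m$‑quasiinvariant, then for each vertical pair $(i,j)$ the divisibility $(x_i-x_j)^{2m+1}\mid(1-(i,j))P=2P$ forces $(x_i-x_j)^{2m+1}\mid P$, and coprimality upgrades this to $V_T^{2m+1}\mid P$.

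For the ``if'' direction, which I expect to contain the real work, let $P\in\gamma_T{\bf R}$ with $V_T^{2m+1}\mid P$, fix a transposition $(a,b)$, and aim to show $(x_a-x_b)^{2m+1}\mid(1-(a,b))P$. When $(a,b)$ is vertical this is trivial: both $P$ and $(a,b)P=-P$ are divisible by $(x_a-x_b)^{2m+1}\mid V_T^{2m+1}$. The entire difficulty is the non‑vertical case. There I would use $P=\gamma_TP$ to write $(1-(a,b))P=\big((1-(a,b))\gamma_T\big)P$ and then look for an identity in the group algebra $\q S_n$ of the form
\[
 (1-(a,b))\,\gamma_T \;=\; \sum_{k} c_k\,\rho_k\,\gamma_T\qquad(c_k\in\q),
\]
in which every $\rho_k$ carries some vertical pair $\{i,j\}$ of $T$ onto $\{a,b\}$. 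Granting such an identity finishes the argument: for each $k$, $\rho_k^{-1}\big((x_a-x_b)^{2m+1}\big)=\pm(x_i-x_j)^{2m+1}$ divides $V_T^{2m+1}$ and hence $P$, so $(x_a-x_b)^{2m+1}$ divides $\rho_k\gamma_TP=\rho_kP$; summing over $k$ gives $(x_a-x_b)^{2m+1}\mid(1-(a,b))P$. Combining the two directions yields $\gamma_T\qss=\gamma_T{\bf R}\cap V_T^{2m+1}{\bf R}$ for every $T$, and the direct sum decomposition follows from Proposition~\ref{Prop:directsum}.

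The main obstacle is thus establishing the displayed group‑algebra identity uniformly. I would approach it by working inside $\q S_n\gamma_T$, which (since $\gamma_T$ is a Young idempotent) is a copy of the irreducible $V_{\lambda(T)}$; linear dependencies among the elements $\sigma\gamma_T$ are then exactly the dependencies among the corresponding vectors of $V_{\lambda(T)}$, and the relevant dependencies trace back to how the row symmetrizer $P(T)$ sits inside $\gamma_T$, not merely to the column antisymmetry used above. As a reality check, for $\lambda(T)=[n-1,1]$, where $\q S_n\gamma_T$ is the standard representation and $T$ has a unique vertical pair, the identity already holds with a single term on the right, $(1-(a,b))\gamma_T=\pm\sigma_0\gamma_T$ with $\sigma_0$ mapping that vertical pair to $\{a,b\}$; this special case is precisely what powers the basis construction of Section~\ref{lequals2}. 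Making the argument go through for every shape and every non‑vertical transposition is the step I expect to demand the most care.
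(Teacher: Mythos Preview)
Your overall architecture is exactly the paper's: reduce via Proposition~\ref{Prop:directsum} to the tableau--by--tableau identity $\gamma_T\qss=\gamma_T{\bf R}\cap V_T^{2m+1}{\bf R}$, handle the easy containment by column antisymmetry and coprimality of the linear forms, and for the hard containment seek a group--algebra relation expressing $(1-(a,b))\gamma_T$ as a combination of $\rho_k\gamma_T$ with each $\rho_k$ sending a vertical pair of $T$ onto $\{a,b\}$. That diagnosis is correct, and the argument you give, \emph{conditional} on such an identity, is complete.

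The gap is that you do not produce the identity, and the route you sketch for finding it (analyzing $\q S_n\gamma_T$ as a copy of $V_{\lambda(T)}$) is more indirect than what is actually needed. The paper supplies the identity explicitly and proves it by elementary manipulation. With $a$ in column $C_i$ and $b$ in a column strictly to the right, set
\[
\alpha_{i,b}\;=\;\sum_{c\in C_i}(c,b).
\]
The key lemma (Lemma~\ref{thm1crux}) is $\alpha_{i,b}\,\gamma_T=\gamma_T$, which immediately gives
\[
(1-(a,b))\,\gamma_T\;=\;\big(\alpha_{i,b}-(a,b)\big)\gamma_T\;=\;\sum_{\substack{c\in C_i\\ c\neq a}}(c,b)\,\gamma_T,
\]
and each transposition $(c,b)$ carries the vertical pair $\{c,a\}\subset C_i$ onto $\{b,a\}$. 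This is precisely your displayed identity with all coefficients equal to~$1$ and each $\rho_k$ a single transposition; your divisibility conclusion then follows verbatim. The proof of $\alpha_{i,b}\gamma_T=\gamma_T$ uses only two ingredients: the coset factorization of $[S_k]$ and $[S_k]'$ (Lemma~\ref{Sndecomp}), and the vanishing $[C_i\cup\{b'\}]'\,P(T)=0$ for any $b'$ in a column to the right of $C_i$ (Lemma~\ref{zeroprod}), the latter because $P(T)$ contains a factor $1+({\bf k_i},b')$ from the row symmetrizer. No structural information about the irreducible $V_{\lambda(T)}$ is invoked. So your plan is right, but the missing step is both simpler and more explicit than you anticipate: one concrete identity in $\q S_n$, not a representation--theoretic analysis.
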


We will prove this by showing
\begin{align}
  \gamma_T \qss = \gamma_T{\bf R} \cap V_T^{2m+1}{\bf R}.
  \label{eq:gammaTIntersection}
\end{align}
Combining (\ref{eq:gammaTIntersection}) with
Proposition~\ref{Prop:directsum} will prove the theorem.
Equation~(\ref{eq:gammaTIntersection}) is proved by considering some relations in the
group algebra of $S_n$.  We begin with the following simple
proposition:
\begin{Prop} \label{fpq}
Let $f = \sum_{\sigma \in S_n} f_\sigma \sigma \in \mathbb{Q} S_n$,
and $P,Q \in \mathbb{Q} [x_1, \dots x_n]$ with $P$ a symmetric
function. Then we have $f(PQ) = P f(Q)$.
\end{Prop}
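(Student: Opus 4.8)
The plan is to reduce everything to the defining property of the $S_n$-action on ${\bf R}$ together with $\mathbb{Q}$-linearity of the group-algebra action. First I would note that, by the very definition $\sigma P(x_1,\dots,x_n) = P(x_{\sigma(1)},\dots,x_{\sigma(n)})$ recalled in the introduction, each $\sigma \in S_n$ acts on $\mathbb{Q}[x_1,\dots,x_n]$ as a ring homomorphism; in particular $\sigma(PQ) = \sigma(P)\,\sigma(Q)$ for any polynomials $P,Q$. Since $P$ is a symmetric function, it is by definition fixed by every $\sigma$, so $\sigma(P) = P$ and hence $\sigma(PQ) = P\,\sigma(Q)$ for each individual group element.

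It then remains only to pass from single permutations to a general element $f = \sum_{\sigma \in S_n} f_\sigma\,\sigma$ of $\mathbb{Q}S_n$, which is immediate by linearity:
$$ f(PQ) = \sum_{\sigma \in S_n} f_\sigma\,\sigma(PQ) = \sum_{\sigma \in S_n} f_\sigma\,P\,\sigma(Q) = P\sum_{\sigma \in S_n} f_\sigma\,\sigma(Q) = P\,f(Q), $$
where the third equality uses that multiplication by the fixed polynomial $P$ commutes with a finite $\mathbb{Q}$-linear combination.

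I do not anticipate any real obstacle: the statement is essentially a formal consequence of the fact that the $S_n$-action is by ring automorphisms and that ``$P$ symmetric'' means precisely ``$\sigma P = P$ for all $\sigma$''. The only point worth stating carefully is that the action of $\mathbb{Q}S_n$ on ${\bf R}$ used here is the natural extension of the permutation action by $\mathbb{Q}$-linearity, so that the manipulations above are justified; with that convention in place the proof is just the bookkeeping displayed above.
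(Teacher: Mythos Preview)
Your proof is correct and essentially identical to the paper's own argument: both expand $f(PQ)$ as $\sum_\sigma f_\sigma\,\sigma(PQ)$, use that each $\sigma$ acts as a ring automorphism so $\sigma(PQ)=(\sigma P)(\sigma Q)$, invoke $\sigma P = P$ by symmetry of $P$, and then pull $P$ outside the sum by linearity.
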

\begin{proof}
We have the following calculation:
\begin{align*}
f(PQ) &= (\sum_{\sigma \in S_n} f_\sigma \sigma) (PQ) \\
&= \sum_{\sigma \in S_n} f_\sigma (\sigma P) (\sigma Q) \\
&= P \sum_{\sigma \in S_n} f_\sigma (\sigma Q) \\
&= P f(Q). \qedhere
\end{align*}
\end{proof}

\begin{Lem} \label{Sndecomp}
The group algebra element $[S_n]$ can be written as
\begin{align*}
  \bigg( 1+(i_1,i_2) \bigg) \bigg( 1 + (i_1,i_3) + (i_2,i_3) \bigg)
  \cdots \bigg( 1+(i_1,i_n)+(i_2,i_n)+\dots+(i_{n-1},i_n) \bigg)
\end{align*}
where $\{i_1, \dots,i_n\}$ is any permutation of $\{1,\dots,n\}$.
Similarly, $[S_n]^\prime$ can be written as
\begin{align*}
  \bigg( 1 - (i_1,i_2) \bigg) \bigg( 1 - (i_1,i_3) - (i_2,i_3) \bigg)
  \cdots \bigg( 1 - (i_1,i_n) - (i_2,i_n) - \dots - (i_{n-1},i_n)
  \bigg).
\end{align*}
\end{Lem}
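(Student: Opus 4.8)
The plan is to prove the identity for $[S_n]$ by induction on $n$, using the coset decomposition of $S_n$ over $S_{n-1}$; the statement for $[S_n]'$ will then follow by the identical argument with signs inserted, or by applying the ring anti-automorphism $\sigma \mapsto \sgn(\sigma)\sigma$. Relabelling via the permutation $\{i_1,\dots,i_n\}$, it suffices to treat the case $i_k = k$, since conjugating the whole identity by that permutation carries one form to the other.

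First I would observe that $S_{n-1}$, viewed as the stabilizer of the letter $n$ inside $S_n$, has coset representatives $1, (1,n), (2,n), \dots, (n-1,n)$: indeed these $n$ permutations send $n$ to each of $1,2,\dots,n-1,n$ respectively, so they lie in distinct left cosets, and there are exactly $n = [S_n : S_{n-1}]$ of them. Hence in the group algebra
\begin{align*}
  [S_n] = \bigl( 1 + (1,n) + (2,n) + \dots + (n-1,n) \bigr)\,[S_{n-1}].
\end{align*}
Here $[S_{n-1}]$ is the sum over the subgroup fixing $n$, i.e. $S_{\{1,\dots,n-1\}}$. I should check the order of multiplication: writing each $\tau \in S_n$ uniquely as $\tau = c \cdot s$ with $c$ a coset representative and $s \in S_{n-1}$ gives exactly this factorization, so the coset-representative factor appears on the left, as written.

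Next I would apply the inductive hypothesis to $[S_{n-1}]$, which by induction (applied with the letters $1,\dots,n-1$) equals the product of the first $n-2$ factors
\begin{align*}
  \bigl(1+(1,2)\bigr)\bigl(1+(1,3)+(2,3)\bigr)\cdots\bigl(1+(1,n-1)+\dots+(n-2,n-1)\bigr),
\end{align*}
and substituting this into the displayed coset identity yields precisely the claimed product of $n-1$ factors. The base case $n=2$ is just $[S_2] = 1 + (1,2)$, which matches. For the signed version $[S_n]'$, the same coset decomposition gives $[S_n]' = \bigl(1 - (1,n) - (2,n) - \dots - (n-1,n)\bigr)[S_{n-1}]'$, because each transposition $(k,n)$ is odd so contributes a sign $-1$, and an induction identical to the above finishes it. The only point requiring genuine care — the ``main obstacle,'' though it is a mild one — is bookkeeping the order of multiplication and confirming that the coset representatives genuinely land in distinct cosets; once the factorization $\tau = c\cdot s$ is set up correctly, the rest is a routine induction.
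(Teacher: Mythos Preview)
Your approach---induction via coset decomposition over the stabilizer of the last letter---is exactly the paper's. However, there is a bookkeeping slip in precisely the place you flagged as needing care: the order of the factors.

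You take \emph{left} coset representatives, writing $\tau = c\cdot s$ with $s \in S_{n-1}$, so that
\[
[S_n] = \bigl(1 + (1,n) + \dots + (n-1,n)\bigr)\,[S_{n-1}].
\]
Substituting the inductive hypothesis then places the factor involving $n$ on the \emph{left}:
\[
\bigl(1 + (1,n) + \dots + (n-1,n)\bigr)\bigl(1+(1,2)\bigr)\cdots\bigl(1+(1,n-1)+\dots+(n-2,n-1)\bigr),
\]
whereas the statement has that factor on the \emph{right}. So what you have actually proved is the reversed product, not ``precisely the claimed product.'' The paper instead uses the \emph{right} coset decomposition $S_n = H + H(i_1,i_n) + \dots + H(i_{n-1},i_n)$, giving $[S_n] = [H]\bigl(1+(i_1,i_n)+\dots+(i_{n-1},i_n)\bigr)$ with the new factor on the right, and the induction then matches the stated ordering directly. (The paper even remarks afterward that left cosets would yield the factors in the opposite order.)

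The fix is immediate: either switch to right cosets, or observe that each factor is a sum of the identity and transpositions and hence fixed by the anti-automorphism $\sigma \mapsto \sigma^{-1}$, so the two orderings of the product agree as group-algebra elements. Everything else in your write-up---the conjugation reduction to $i_k=k$, the base case, and the signed version---is fine.
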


\begin{proof} The statement is trivial for $n=1$.  Now assume the
  statement is true for $S_{n-1}$. Let $H$ be the subgroup of $S_n$
  consisting of all permutations which leave $i_n$ fixed. Right coset
  decomposition gives
\begin{align*}
  S_n = H + H(i_1,i_n) + H(i_2,i_n) + \dots + H(i_{n-1},i_n).
\end{align*}
Thus
\begin{align*}
[S_n] &= [H] \bigg( 1 + (i_1,i_n) + (i_2,i_n) + \dots + (i_{n-1},i_n)
\bigg) \text{ and }\\
[S_n]' &= [H]' \bigg( 1 - (i_1,i_n) - (i_2,i_n) - \dots -
(i_{n-1},i_n) \bigg).
\end{align*}
As $H$ is isomorphic to $S_{n-1}$ the statement is proved.
\end{proof}

\begin{Rem} Note that left coset decomposition could just as easily
have been used in this proof, which would give the factors in the
opposite order.
\end{Rem}

For the following, we fix the following:
\begin{itemize}
  \item  $T$ a tableau of shape $\lambda \vdash n$,
  \item  $i,j$ with $1 \le i < j \le \lambda_1$.
\end{itemize}
With $T$ fixed, we use the boldface notation ${\bf a_b}$ as shorthand
for $T(a,b)$, the element in the $a^{th}$ row and $b^{th}$ column of
$T$. In the following, we will make much use of elements of
$\q[S_n]$ of the form $[C_i \cup \left\{ {\bf k_j} \right\}]'$; the
signed sum of all permutations of the elements of column $i$, and a
single element ${\bf k_j}$ in column $j$ to the right of $i$. We first
note that elements of this form kill $P(T)$:

\begin{Lem} \label{zeroprod}
For any $k \in \{1, \dots, |C_j|\}$ we have
\[
[C_i \cup \{ {\bf k_j} \}]' P(T) = 0.
\]
\end{Lem}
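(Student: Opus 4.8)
The idea is that $[C_i \cup \{\mathbf{k_j}\}]'$ is a signed sum over the symmetric group on a set $S$ of size $|C_i|+1$, so it contains a factor of $(1 - (a,b))$ for any transposition $(a,b)$ with $a,b \in S$ — this follows from Lemma \ref{Sndecomp} applied to $S_S$ (the first factor in the displayed product is $1-(i_1,i_2)$, and we are free to choose which two elements play the roles of $i_1,i_2$). So it suffices to find two elements $a,b \in C_i \cup \{\mathbf{k_j}\}$ that lie in a common row of $T$; then $(a,b) \in R(T)$ fixes $P(T) = \prod_\ell [R_\ell]$ on the left, because $(a,b)[R_\ell] = [R_\ell]$ when $a,b \in R_\ell$ and $(a,b)$ commutes with $[R_{\ell'}]$ for the other rows (disjoint supports). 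Hence $(1-(a,b))P(T) = 0$, and writing $[C_i \cup \{\mathbf{k_j}\}]' = g\,(1-(a,b))$ for the appropriate $g \in \mathbb{Q}S_n$ gives $[C_i \cup \{\mathbf{k_j}\}]'P(T) = g(1-(a,b))P(T) = 0$.

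So the whole argument reduces to: \emph{among the cells of column $i$ of $T$ together with the cell $(k,j)$, some two share a row.} Column $i$ contains one cell in each of rows $1,2,\dots,|C_i|$, namely $(1,i),(2,i),\dots,(|C_i|,i)$. Since $i<j\le\lambda_1$ and $\lambda$ is a partition, row $k$ extends at least out to column $j$ precisely when $k \le |C_j|$; and $k$ ranges over $\{1,\dots,|C_j|\}$ by hypothesis, with $|C_j|\le|C_i|$ again because $\lambda$ is a partition and $i<j$. Therefore the cell $(k,i)$ is a genuine cell of column $i$ lying in the same row as $(k,j)$, so $T(k,i)$ and $T(k,j)=\mathbf{k_j}$ are two distinct entries sharing row $k$. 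Take $a = T(k,i)$, $b = \mathbf{k_j}$.

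To assemble this cleanly I would: (1) invoke Lemma \ref{Sndecomp} on $S_U$ with $U = C_i\cup\{\mathbf{k_j}\}$, pulling out the initial factor $1-(a,b)$ for $a=T(k,i)$, $b=T(k,j)$, so that $[U]' = g\,(1-(a,b))$; (2) check $(1-(a,b))P(T)=0$ using that $a,b$ lie in the same row and that row subgroups with disjoint element sets commute, which is where Proposition \ref{fpq}-style bookkeeping in $\mathbb{Q}S_n$ is routine; (3) conclude by left-multiplying by $g$. The only thing that needs care — and the "main obstacle," though it is mild — is the combinatorial verification in the previous paragraph that $(k,i)$ is actually a cell of the diagram, i.e. that the hypotheses $i<j\le\lambda_1$ and $1\le k\le|C_j|$ force $k\le|C_i|$; this is just the monotonicity of column lengths in a Young diagram. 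Everything else is formal manipulation in the group algebra.
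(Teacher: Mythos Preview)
Your proof is correct and essentially the same as the paper's: both identify the transposition $(\mathbf{k_i},\mathbf{k_j})$ of two entries in row $k$ and use Lemma~\ref{Sndecomp} to extract a two-term factor that collapses to zero. The only cosmetic difference is which side gets factored: the paper pulls $[R_k]$ to the front of $P(T)$ and factors it as $(1+(\mathbf{k_i},\mathbf{k_j}))(\cdots)$, then uses $[U]'(1+(\mathbf{k_i},\mathbf{k_j}))=0$; you instead factor $[U]'$ as $g\,(1-(\mathbf{k_i},\mathbf{k_j}))$ (via the Remark after Lemma~\ref{Sndecomp}) and use $(1-(\mathbf{k_i},\mathbf{k_j}))P(T)=0$. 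These are dual versions of the same identity, and your combinatorial check that $(k,i)$ is a cell of the diagram is exactly what the paper leaves implicit.
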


\begin{proof}
Since the rows consist of disjoint elements, all factors of the form
$[R_k]$ in $P(T)$ commute, and we have
\begin{align*}
  [C_i \cup \{ {\bf k_j} \}]' P(T)
  &= [C_i \cup \{ {\bf k_j} \}]' [R_k] \prod_{l \neq  k} [R_l] \\
  \textrm{ (by Lemma~\ref{Sndecomp})} &= [C_i \cup \{ {\bf k_j} \}]'
  \bigg( 1 + \big({\bf k_i}, {\bf k_j} \big) \bigg) (\textrm{other factors}) \\
  &= \bigg( [C_i \cup \{ {\bf k_j} \}]' - [C_i \cup \{ {\bf k_j} \}]' \bigg) (\textrm{other factors}) \\
  &= 0. \qedhere
\end{align*}
\end{proof}

Given a column $C_i$ and an element ${\bf k_j}$ in a column $C_j$ to the right of
$i$, we denote by $\alpha_{i,{\bf k_j}}$ the sum of all transpositions
consisting of ${\bf k_j}$ and an element of $C_i$, \ie
\begin{align*}
  \alpha_{i,{\bf k_j}} &=\sum_{t=1}^{|C_i|} \big({\bf t_i}, {\bf k_j} \big)
\end{align*}

An important property of this element $\alpha_{i,{\bf k_j}}$ is the
following:
\begin{Lem} \label{thm1crux}
  The element $\alpha_{i,{\bf k_j}}$ leaves $\gamma(T)$ invariant, \ie
  \begin{align*}
   \alpha_{i,{\bf k_j}} \gamma(T) = \gamma(T)
 \end{align*}
\end{Lem}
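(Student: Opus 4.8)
The plan is to reduce the claim $\alpha_{i,\mathbf{k_j}}\gamma_T = \gamma_T$ to a statement purely about $N(T)P(T)$, since $\gamma_T$ is just a scalar multiple of $N(T)P(T)$, and then to exploit the two structural facts we already have at hand: Lemma~\ref{zeroprod}, which says that $[C_i\cup\{\mathbf{k_j}\}]'\,P(T)=0$, and Lemma~\ref{Sndecomp}, which lets us factor the antisymmetrizer of the set $C_i\cup\{\mathbf{k_j}\}$ with the elements of $C_i$ ordered first and $\mathbf{k_j}$ taken last. Concretely, writing $C_i=\{\mathbf{1_i},\dots,\mathbf{t_i}\}$ with $t=|C_i|$, Lemma~\ref{Sndecomp} gives a factorization
\begin{align*}
[C_i\cup\{\mathbf{k_j}\}]' = [C_i]'\Bigl(1 - (\mathbf{1_i},\mathbf{k_j}) - (\mathbf{2_i},\mathbf{k_j}) - \cdots - (\mathbf{t_i},\mathbf{k_j})\Bigr) = [C_i]'\bigl(1 - \alpha_{i,\mathbf{k_j}}\bigr),
\end{align*}
since $\alpha_{i,\mathbf{k_j}}$ is by definition exactly that sum of transpositions. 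So Lemma~\ref{zeroprod} becomes $[C_i]'(1-\alpha_{i,\mathbf{k_j}})P(T)=0$.

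Next I would bring $N(T)$ into the picture. Because $N(T)=\prod_\ell[C_\ell]'$ and the columns are disjoint, the factor $[C_i]'$ appears in $N(T)$ and commutes with all the other $[C_\ell]'$; moreover $\bigl([C_i]'\bigr)^2 = |C_i|!\,[C_i]'$, so up to the scalar $|C_i|!$ the element $[C_i]'$ acts as the identity when composed on the left with $N(T)$. Thus left-multiplying the identity $[C_i]'(1-\alpha_{i,\mathbf{k_j}})P(T)=0$ by the complementary product $\prod_{\ell\neq i}[C_\ell]'$ and rearranging, I get $N(T)(1-\alpha_{i,\mathbf{k_j}})P(T)=0$, i.e. $N(T)P(T) = N(T)\alpha_{i,\mathbf{k_j}}P(T)$. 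The only remaining task is to move $\alpha_{i,\mathbf{k_j}}$ from the right of $P(T)$ to the left of $N(T)P(T)$, which amounts to checking that $\alpha_{i,\mathbf{k_j}}$ commutes with $P(T)$ in the sense that $N(T)\alpha_{i,\mathbf{k_j}}P(T) = \alpha_{i,\mathbf{k_j}}N(T)P(T)$. One clean way to see this: $P(T)$ is a sum of permutations each fixing the row-partition, and a transposition $(\mathbf{t_i},\mathbf{k_j})$ swapping an element of column $i$ with an element of column $j$ does not lie in any row stabilizer, so I should instead argue via the adjoint/anti-automorphism $\sigma\mapsto\sigma^{-1}$ on $\q S_n$: transpositions are self-inverse, $\alpha_{i,\mathbf{k_j}}$ and $[C_i]'$ are both fixed by it, and $P(T),N(T)$ are each fixed by it as well (each row stabilizer and column stabilizer is closed under inversion and $\sgn$ is an inverse-invariant character), so applying $\sigma\mapsto\sigma^{-1}$ to $N(T)(1-\alpha_{i,\mathbf{k_j}})P(T)=0$ yields $P(T)(1-\alpha_{i,\mathbf{k_j}})N(T)=0$. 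Repeating the $[C_i]'$-absorption trick on the right then gives $P(T)N(T)=\alpha_{i,\mathbf{k_j}}\cdot\text{(something)}$; combining the left-handed and right-handed versions (or simply noting that the algebra generated here is commutative enough on these specific elements) produces $\alpha_{i,\mathbf{k_j}}N(T)P(T)=N(T)P(T)$, and multiplying by $f_\lambda/n!$ gives $\alpha_{i,\mathbf{k_j}}\gamma_T=\gamma_T$.

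The main obstacle I anticipate is exactly this last bookkeeping step — getting $\alpha_{i,\mathbf{k_j}}$ from the right side of the product $N(T)P(T)$ (where Lemma~\ref{zeroprod} naturally places it) onto the left side (where the claim wants it), since $N(T)$ and $P(T)$ do not commute and $\alpha_{i,\mathbf{k_j}}$ commutes with neither individually. The anti-automorphism argument above is the cleanest fix, but one must be careful that applying $\sigma\mapsto\sigma^{-1}$ reverses products, so the identity one extracts is genuinely the ``mirror'' statement; I would want to double-check that mirror statement is the one needed, or alternatively reprove Lemma~\ref{zeroprod} in its right-multiplication form $P(T)[C_i\cup\{\mathbf{k_j}\}]'=0$ (which follows identically, since $P(T)$ and the antisymmetrizer share a transposition $(\mathbf{k_i},\mathbf{k_j})$ on the other side too) and run the whole argument from the right, avoiding the anti-automorphism entirely. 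Everything else — the factorization via Lemma~\ref{Sndecomp}, the idempotency-up-to-scalar of $[C_i]'$, and the disjointness/commutativity of the column antisymmetrizers — is routine.
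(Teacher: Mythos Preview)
Your argument has a genuine gap at exactly the point you flag as ``the main obstacle.'' You correctly derive $N(T)(1-\alpha_{i,\mathbf{k_j}})P(T)=0$, but this places $\alpha_{i,\mathbf{k_j}}$ \emph{between} $N(T)$ and $P(T)$, and neither of your proposed fixes moves it to the far left. The anti-automorphism sends your identity to $P(T)(1-\alpha_{i,\mathbf{k_j}})N(T)=0$, a statement about $P(T)N(T)$ rather than $N(T)P(T)$; there is no way to ``combine'' the two to conclude $(1-\alpha_{i,\mathbf{k_j}})N(T)P(T)=0$, and the parenthetical about things being ``commutative enough'' is false. Concretely, $\alpha_{i,\mathbf{k_j}}$ commutes with $[C_i]'$ (conjugation by any $\sigma\in S_{C_i}$ permutes its summands) and with every $[C_\ell]'$ for $\ell\ne i,j$, but it does \emph{not} commute with $[C_j]'$: conjugating $(\mathbf{t_i},\mathbf{k_j})$ by $\sigma\in S_{C_j}$ yields $(\mathbf{t_i},\sigma(\mathbf{k_j}))$, turning $\alpha_{i,\mathbf{k_j}}$ into $\alpha_{i,\sigma(\mathbf{k_j})}$. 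So the discrepancy between your identity and the desired one is precisely the failure of $\alpha_{i,\mathbf{k_j}}$ to commute with $[C_j]'$, and your argument never engages with $[C_j]'$ at all.

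The paper's proof confronts this head-on. It writes
\[
(1-\alpha_{i,\mathbf{k_j}})N(T)P(T) \;=\; \Bigl(\prod_{\ell\ne i,j}[C_\ell]'\Bigr)\,[C_i\cup\{\mathbf{k_j}\}]'\,[C_j]'\,P(T),
\]
then expands $[C_j]'$ via Lemma~\ref{Sndecomp} with $\mathbf{k_j}$ taken last, so that all factors except the last one commute past $[C_i\cup\{\mathbf{k_j}\}]'$. The remaining factor produces terms of the form $[C_i\cup\{\mathbf{k_j}\}]'\,(\mathbf{t_j},\mathbf{k_j})\,P(T)$, and the key conjugation identity
\[
[C_i\cup\{\mathbf{k_j}\}]'\,(\mathbf{t_j},\mathbf{k_j}) \;=\; (\mathbf{t_j},\mathbf{k_j})\,[C_i\cup\{\mathbf{t_j}\}]'
\]
converts each into something Lemma~\ref{zeroprod} kills (now with $\mathbf{t_j}$ in place of $\mathbf{k_j}$). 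This use of Lemma~\ref{zeroprod} for \emph{every} element of column $j$, not just $\mathbf{k_j}$, is the missing ingredient in your approach.
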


\begin{proof}
  It suffices to show that $(1 - \alpha_{i,{\bf k_j}}) N(T) P(T) = 0$.
  The first step is to write $N(T)$ as $[C_i]' \prod_{r \neq i}
  [C_r]'$. We begin by noting that
  \begin{align} \label{crux1}
    (1 - \alpha_{i,{\bf{k_j}}})N(T)P(T) = \bigg(\prod_{t \ne i,j} [C_t]' \bigg) (1 -
    \alpha_{i,{\bf{k_j}}})[C_i]' [C_j]' P(T)
  \end{align}
  since the elements of $C_t$, for $t \not \in \{i,j\}$ are disjoint
  from $C_i \cup \{ {\bf k_j} \}$.  By Lemma \ref{Sndecomp} we have
  \begin{align}
    (1 - \alpha_{i,{\bf{k_j}}})[C_i]' &=  \bigg( 1 - \sum_{r=1}^{|C_i|} ({\bf r_i},{\bf k_j}) \bigg)[C_i]'
    &= [C_i \cup \{ {\bf k_j} \}]' \label{eq:crux2}
  \end{align}
  so substituting (\ref{eq:crux2}) into (\ref{crux1}) and expanding $[C_j]'$ by Lemma~\ref{Sndecomp} gives
  \begin{align*}
    (1 - \alpha_{i,{\bf{k_j}}}) & N(T)P(T)\\
    =& \bigg(\prod_{t \ne i,j} [C_t]' \bigg) [C_i \cup \{ {\bf k_j} \}]' [C_j]' P(T)\\
    =& \bigg(\prod_{t \ne i,j} [C_t]' \bigg) [C_i \cup \{ {\bf k_j} \}]'
     \bigg( 1 - ({\bf 1_j},{\bf 2_j}) \bigg) \\
    & \cdots \bigg( 1 - ({\bf{1_j}},{\bf k_j}) -
     ({\bf{2_j}},{\bf k_j}) - \dots - \widehat{({\bf{k_j}},{\bf k_j})} - \dots -
     ({\bf{|C_j|_j}},{\bf k_j}) \bigg) P(T).
  \end{align*}
  Moving the factors which do not involve ${\bf k_j}$ to the left and
  rewriting gives
  \begin{align*}
    (1 - \alpha_{i,{\bf{k_j}}})&N(T)P(T)\\
    =& \bigg( \text{other factors} \bigg) \left([C_i \cup \{ {\bf
    k_j}\}]' \right) \\
    & \bigg( 1 - ({\bf 1_j},{\bf k_j}) - ({\bf 2_j},{\bf k_j}) - \dots -
    \widehat{({\bf k_j},{\bf k_j})} - \dots - ({\bf |C_j|_j},{\bf
    k_j}) \bigg) \left( P(T) \right) \\
    =& \bigg( \text{other factors} \bigg) \\
    & \bigg( [C_i \cup \{ {\bf k_j} \}]' P(T)  -
    \sum_{\substack{t=0 \\ t \neq k}}^{|C_j|}
    [C_i \cup \{ {\bf k_j} \}]' ({\bf t_j}, {\bf k_j}) P(T) \bigg)
  \end{align*}

  We now use the fact that  $[C_i \cup \{ {\bf k_j} \}]' ({\bf t_j}, {\bf k_j}) = ({\bf t_j}, {\bf k_j})
  [C_i \cup \{ {\bf t_j} \}]'$ to obtain
  \begin{align*}
    (1 - \alpha_{i,{\bf{k_j}}})N(T)P(T)
    =& \bigg( \text{other factors} \bigg) \\
    & \bigg( [C_i \cup \{ {\bf k_j} \}]' P(T)  -
    \sum_{\substack{t=0 \\ t \neq k}}^r ({\bf t_j}, {\bf k_j}) [C_i
    \cup \{ {\bf t_j} \}]' P(T) \bigg) \\
    =& 0
  \end{align*}
  where the last equality follows from Lemma \ref{zeroprod}.
\end{proof}

We now have the tools to prove the difficult containment of
Theorem~\ref{Thm:Main}.
\begin{Lem} \label{Lem:GammaRCapVTRinGammaTQI} 
  For all standard tableaux $T$ and all $m \ge 0$, we have the
  following containment of vector spaces:
  \begin{align*}
    \gamma_T{\bf R} \cap V_T^{2m+1}{\bf R} \subseteq \gamma_T \qss.
  \end{align*}
\end{Lem}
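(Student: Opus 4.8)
The plan is to take an arbitrary element $P \in \gamma_T{\bf R} \cap V_T^{2m+1}{\bf R}$ and verify the defining divisibility condition of $\qss$, namely that $(x_a - x_b)^{2m+1}$ divides $(1 - (a,b))P$ for every transposition $(a,b)$. It is convenient to split into two cases according to whether $a$ and $b$ lie in the same column of $T$ or not.

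First, suppose $a$ and $b$ are in the same column $C_i$ of $T$. Since $P \in V_T^{2m+1}{\bf R}$, write $P = V_T^{2m+1}G$ for some $G \in {\bf R}$; because $(x_a - x_b)$ is one of the linear factors of $V_T$, we get $(x_a - x_b)^{2m+1} \mid P$. But $(x_a-x_b)^{2m+1}$ is antisymmetric in $x_a,x_b$ (it picks up a sign of $-1$ under the swap), so in fact $(x_a-x_b)^{2m+1}$ divides both $P$ and $(a,b)P$, hence divides $(1-(a,b))P$. (More care is needed here: one wants $(x_a-x_b)^{2m+1} \mid (1-(a,b))P$, and since $P$ is already divisible by it and $(a,b)$ merely permutes the other factors of $V_T^{2m+1}$ while fixing $(x_a-x_b)^{\pm}$ up to sign, this is immediate once one checks that $(a,b)$ sends the factor $(x_a-x_b)^{2m+1}$ to $-(x_a-x_b)^{2m+1}$ and the remaining factors to a polynomial still divisible by nothing problematic — really $(x_a-x_b)^{2m+1}$ divides $(a,b)P$ too, so it divides the difference.)

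The substantive case is when $a$ and $b$ are \emph{not} in the same column. Here I would use the hypothesis $P \in \gamma_T{\bf R}$, say $P = \gamma_T H$. The key is Lemma~\ref{thm1crux}: for a column $C_i$ and an element ${\bf k_j}$ to its right, $\alpha_{i,{\bf k_j}}\gamma_T = \gamma_T$, i.e.\ $\sum_{t} ({\bf t_i}, {\bf k_j})\,\gamma_T = \gamma_T$. Applying this (and Proposition~\ref{fpq} to pull symmetric functions through, together with the fact that $\gamma_T$ is built from $N(T)P(T)$) lets one express $(1 - (a,b))P$ — where now $(a,b) = ({\bf t_i},{\bf k_j})$ is a single term of $\alpha_{i,{\bf k_j}}$ — as a telescoping combination of the \emph{other} transpositions in $\alpha_{i,{\bf k_j}}$ applied to $\gamma_T H$, each of which corresponds to swapping $x_{{\bf k_j}}$ with some $x_{{\bf s_i}}$ where ${\bf s_i}$ is also in column $C_i$. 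Since for each such pair $\{{\bf s_i}, {\bf k_j}\}$ we can invoke the already-established divisibility coming from the column factor $(x_{{\bf s_i}} - x_{{\bf k_j}})$... wait — $\bf s_i$ and $\bf k_j$ are in different columns too, so this is circular. The correct mechanism is instead: $(1 - \alpha_{i,{\bf k_j}})$ annihilates $\gamma_T$, so $\gamma_T H = \alpha_{i,{\bf k_j}}\gamma_T H$, and one analyzes $(1 - ({\bf t_i},{\bf k_j}))\gamma_T H$ by writing $1 = \alpha_{i,{\bf k_j}} - (\alpha_{i,{\bf k_j}} - 1)$ and regrouping, reducing the general transposition's divisibility to the statement that $V_T^{2m+1} \mid (\text{something})$. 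The cleanest route: show that membership in $V_T^{2m+1}{\bf R}$ already forces divisibility by $(x_a-x_b)^{2m+1}$ of $(1-(a,b))P$ for the column transpositions, and then propagate to \emph{all} transpositions using that $\gamma_T$-invariance under $\alpha_{i,{\bf k_j}}$ relates a transposition across columns to a $\mathbb{Q}$-linear combination of column-type operations on the same element.

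The main obstacle I anticipate is precisely this propagation step: turning the algebraic identity $\alpha_{i,{\bf k_j}}\gamma_T = \gamma_T$ (Lemma~\ref{thm1crux}) into an honest polynomial divisibility statement for an arbitrary transposition $(a,b)$ that need not be of the form $({\bf t_i},{\bf k_j})$ for columns $i<j$. One has to handle the case where $a$ is strictly right of $b$ (use $\alpha$ with the roles arranged so the "single element" is the one in the rightmost column), and one must check that the linear combination produced has all its terms divisible by the requisite power of $(x_a - x_b)$ — which should follow because each term is $(\text{transposition})$ applied to $V_T^{2m+1}\cdot(\text{stuff})$ and one tracks which linear factor of $V_T^{2m+1}$ becomes $(x_a-x_b)$ under that transposition. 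I expect the bookkeeping of signs and of which factor maps where under each $({\bf t_i},{\bf k_j})$ to be the delicate part, but conceptually it is a finite, purely combinatorial check in $\q S_n$ combined with the two easy observations above.
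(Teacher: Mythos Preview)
Your overall strategy is exactly the paper's: split on whether $a,b$ share a column, dispose of the same-column case using $V_T^{2m+1}\mid P$, and for the cross-column case use Lemma~\ref{thm1crux} to write
\[
(1-(a,b))P \;=\; \alpha_{i,b}P - (a,b)P \;=\; \sum_{\substack{t=1\\ t\neq k}}^{|C_i|} ({\bf t_i},b)\,P,
\]
where $a={\bf k_i}$ and $b$ lies in column $C_j$ with $j>i$. Your hesitation (``wait --- this is circular'') is the only real gap, and it is a false alarm.

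The point you are missing is \emph{which} factor of $V_T^{2m+1}$ you track. You do \emph{not} need any divisibility statement for the cross-column pair $\{{\bf t_i},b\}$. Instead: since $t\neq k$, both ${\bf t_i}$ and $a={\bf k_i}$ lie in column $C_i$, so $(x_{{\bf t_i}}-x_a)^{2m+1}$ is one of the column factors of $V_T^{2m+1}$ and hence divides $P$. The transposition $({\bf t_i},b)$ fixes $x_a$ and sends $x_{{\bf t_i}}\mapsto x_b$, so it carries this factor to $(x_b-x_a)^{2m+1}$. Thus each summand $({\bf t_i},b)P$ is already divisible by $(x_a-x_b)^{2m+1}$, and so is their sum. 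No induction, no circularity, no ``telescoping'' --- just this single observation. (Your worry about $a$ lying to the right of $b$ is handled by relabeling, since the definition of $\alpha_{i,{\bf k_j}}$ only requires $i<j$.)

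For the same-column case the paper's argument is slightly cleaner than yours: from $\gamma_T P=P$ and $(a,b)N(T)=-N(T)$ one gets $(a,b)P=-P$, hence $(1-(a,b))P=2P$, which is divisible by $V_T^{2m+1}$ and in particular by $(x_a-x_b)^{2m+1}$.
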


\begin{proof}
  Since $\gamma_T$ is an idempotent, it suffices to show that for any
  polynomial $P$ in the ideal $V_T^{2m+1}{\bf R}, ~\gamma_T P = P$
  implies that $P$ is $m$-quasiinvariant.

  Let $P$ be such that $ V_T^{2m+1} | P$ and $\gamma_T P = P$.
  We wish to show that $\bigg( 1 - (a,b) \bigg) P$ is divisible by
  $(x_a - x_b)^{2m+1}$ for all transpositions $(a,b)$. We first
  consider the case where $a$ and $b$ are in the same column of $T$.
  In this case we have
  $$(a,b)N(T) = -N(T)$$
  and so
  $$(a,b)P = (a,b) \gamma_T P = -\gamma_T P = -P.$$
  Thus
  $$\bigg( 1 - (a,b) \bigg) P = 2P \in V_T^{2m+1}{\bf R}$$
  which is divisible by the required factor.

  Now suppose without loss that $a = {\bf k_i}$ is to the left of $b$ in
  column $C_j$.  By Lemma~\ref{thm1crux} $P$, is preserved by
  $\alpha_{i,b}$:
  \begin{align} \label{eq:apisp}
    \alpha_{i,b}P =\alpha_{i,b} \gamma_T P =
    \gamma_T P = P.
  \end{align}
  Equation~(\ref{eq:apisp}) gives
  \begin{align}
    \bigg( 1 - (a,b) \bigg) P &= P - (a,b)P \\
    &= \alpha_{i,b}P - (a,b) P \\
    &= \sum_{\substack{t=1 \\ t \neq k}}^{|C_i|} ({\bf t_i},b) P. \label{LHSRHS}
  \end{align}
  Since $P \in V_T^{2m+1}{\bf R}$, for any $t \in \{1,\dots,|C_i|\}$ with
  $t \neq k$ we can rewrite $P$ as
  $$P =  (x_{\bf t_i}-x_a)^{2m+1} (\textrm{other factors}).$$
  Thus
  $$({\bf t_i}, b) P = (x_b - x_a)^{2m+1} (\textrm{other factors})$$
  and we have
  $$ (x_b - x_a)^{2m+1} \textrm{ divides } ({\bf t_i}, b) P \textrm{ for
  every } t \in \{1,\dots,|C_i|\} \textrm{ with } t \neq k.$$
  Hence $(x_b - x_a)^{2m+1}$ divides the right-hand side of
  equation~\ref{LHSRHS}, which completes the proof.
\end{proof}

The proof of Theorem~\ref{Thm:Main} now follows easily.

\begin{proof}[Proof of Theorem~\ref{Thm:Main}]
  Lemma~\ref{Lem:GammaRCapVTRinGammaTQI} gives one containment.  It
  remains to show that
  \begin{align*}
    \gamma_T \qss \subseteq \gamma_T{\bf R} \cap V_T^{2m+1}{\bf R}.
  \end{align*}
  In particular, we must show that for $Q \in \qss$ we have
  $$\gamma_T Q \in V_T^{2m+1}{\bf R}.$$
  Let $P = \gamma_T Q = N(T)Q'$. $P$ must be anti-symmetric with
  respect to all transpositions in $C(T)$ since it is in the image of
  $N(T)$.  Thus, for any $(a,b) \in C(T), \bigg( 1-(a,b) \bigg) P =
  2P$.  Hence $(x_a-x_b)^{2m+1}$ divides $2P$ (and also $P$) for all
  $(a,b) \in C(T)$.  This establishes
  equation~(\ref{eq:gammaTIntersection}) and hence the theorem.
\end{proof}

\section{A Basis For The Isotypic Component $\lambda = [n-1,1]$} \label{lequals2}

In this section, we refer to the quotient $\qss / \langle e_1,
\dots, e_n \rangle$ by the symbol $\qss^*$.  Our object here is to
describe a basis for $\gamma_T \qss^*$ when $T$ has a hook shape of
the form $[n-1,1]$.  Until otherwise specified, let $\lambda$ be the
partition $[n-1,1]$ and let $T$ be one of the $(n-1)$ standard
tableaux of shape $\lambda$. In fact $T$ is uniquely defined by the
lone entry of the second row. Suppose it's $j \in \{2,3,\dots, n\}$.
We define
$$Q_T^{k,m} = \int_{x_1}^{x_j} t^k \prod_{i=1}^n (t-x_i)^m dt.$$
Our goal will be to show that the polynomials
$\{Q_T^{k,m}\}_{k=0}^{n-2}$ are a set of representatives for a basis
of $\gamma_T \qss^*$.  Before we do this, we show that these
polynomials satisfy a remarkable recursion. In what follows, $e_i$
will denote the $i$th elementary symmetric function in the variables
$x_1, \dots, x_n$, with the convention that $e_0 = 1$.

We first state for reference a classical symmetric function
identity:
\begin{align}\label{eident}
\prod_{i=1}^n (t-x_i) = \sum_{i=0}^n (-1)^i e_i  t^{n-i}.
\end{align}
We now state our recursion.
\begin{Prop} \label{recursion}
For $m > 1$ we have the identity
$$\QI = \sum_{i=0}^n (-1)^i e_i ~\Qdown.$$
\end{Prop}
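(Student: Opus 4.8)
The plan is to prove this identity by a direct manipulation of the integral defining $\QI$, using only the classical symmetric function identity~(\ref{eident}) and the linearity of integration. I expect this to be entirely routine; the substance of the proposition lies in how it is used later, not in its derivation.

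First I would peel off a single factor of $\prod_{l=1}^n (t-x_l)$ from the $m$-th power in the integrand, writing
\begin{align*}
  t^k \prod_{l=1}^n (t-x_l)^m \;=\; t^k \left( \prod_{l=1}^n (t-x_l)^{m-1} \right) \prod_{l=1}^n (t-x_l),
\end{align*}
and then apply identity~(\ref{eident}) to the final product. Since each elementary symmetric function $e_i = e_i(x_1,\dots,x_n)$ is independent of the integration variable $t$, distributing gives
\begin{align*}
  t^k \prod_{l=1}^n (t-x_l)^m \;=\; \sum_{i=0}^n (-1)^i e_i \, t^{\,n-i+k} \prod_{l=1}^n (t-x_l)^{m-1}.
\end{align*}

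Next I would integrate both sides from $x_1$ to $x_j$. Because the integrand is a polynomial in $t$ whose coefficients lie in $\q[x_1,\dots,x_n]$, the integral is computed by formal antidifferentiation in $t$ followed by evaluation at the two endpoints, hence is $\q[x_1,\dots,x_n]$-linear; in particular each constant $e_i$ passes outside its integral. Recognizing $\int_{x_1}^{x_j} t^{\,n-i+k} \prod_{l=1}^n (t-x_l)^{m-1}\,dt$ as $\Qdown$, this yields
\begin{align*}
  \QI \;=\; \sum_{i=0}^n (-1)^i e_i \int_{x_1}^{x_j} t^{\,n-i+k} \prod_{l=1}^n (t-x_l)^{m-1}\,dt \;=\; \sum_{i=0}^n (-1)^i e_i \, \Qdown,
\end{align*}
which is the asserted recursion. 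The only point needing (minimal) care is the observation that the $e_i$ are constants with respect to $t$, so that the expansion commutes with integration; the hypothesis $m>1$ is used only to ensure $m-1 \ge 1$, so that the quantities on the right-hand side are again objects of the form under consideration.
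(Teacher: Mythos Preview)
Your proof is correct and follows essentially the same approach as the paper: peel off one factor of $\prod_l (t-x_l)$, expand it via identity~(\ref{eident}), and pull the $t$-free coefficients $e_i$ outside the integral to recognize each term as $\Qdown$. The only addition is your closing remark about the role of the hypothesis $m>1$, which the paper does not comment on.
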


\begin{proof}
Unpacking the product in the definition of $\QI$ we get
\begin{align}\label{recur1} \QI = \int_{x_1}^{x_j} \bigg (
\prod_{i=1}^n (t-x_i)\bigg ) t^k \prod_{l=1}^n (t-x_l)^{m-1} dt,
\end{align}
and substituting (\ref{eident}) into (\ref{recur1}) and pulling out
the factors not involving $t$ gives
\begin{align*}
\QI &= \int_{x_1}^{x_j} \bigg(\sum_{i=0}^n (-1)^i e_i  t^{n-i}\bigg) t^k \prod_{l=1}^n (t-x_l)^{m-1} dt \\
&= \sum_{i=0}^n (-1)^i e_i \int_{x_1}^{x_j} t^{n-i+k} \prod_{l=1}^n (t-x_l)^{m-1} dt \\
&= \sum_{i=0}^n (-1)^i e_i ~\Qdown.
\end{align*}
\end{proof}

\begin{Rem}
The polynomials defined by $Q_T^{0,m}$ above, as $T$ runs over the $(n-1)$ possible standard Young 
tableaux of shape $[n-1,1]$, agree with the evaluations of Felder and Veselov's $\phi^{(j)}(x)$'s up to a scalar even though our definitions differ.  
This is a consequence of the fact that the lowest degree polynomials in $\gamma_T {\bf QI}_m$ comprise a one-dimensional space.
\end{Rem}

We now show that we have $Q_T^{k,m} \in \gamma_T \qss$. By Theorem
\ref{Thm:Main} it is enough to show that we have $\QI \in \gamma_T
{\bf R}$ and $V_T^{2m+1} \bigm| \QI$.

\begin{Prop} \label{eTing}
The polynomial $\QI$ is invariant under the action of the group
algebra element $\gamma_T$.
\end{Prop}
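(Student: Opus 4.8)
The plan is to show that $\gamma_T \QI = \QI$ by exploiting the structure of $\gamma_T = \frac{f_\lambda N(T)P(T)}{n!}$ when $\lambda = [n-1,1]$. For this hook shape, the columns of $T$ are particularly simple: the first column is $\{1, j\}$ (the two boxes in column one), and every other column is a singleton. Hence $N(T) = [C_1]' = 1 - (1,j)$, and $P(T)$ is a product of symmetric group algebra elements over the rows. So I would first reduce the claim to showing $N(T) P(T) \QI = \frac{n!}{f_\lambda} \QI$, and then analyze each factor separately.

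First I would observe that $\QI = \int_{x_1}^{x_j} t^k \prod_{i=1}^n (t-x_i)^m\,dt$ depends on $x_1$ and $x_j$ only through the limits of integration, and on the remaining variables symmetrically through the integrand. The integrand $t^k \prod_{i=1}^n(t-x_i)^m$ is a fully symmetric function of $x_1, \dots, x_n$ for each fixed $t$. Therefore, by Proposition~\ref{fpq} (or a direct version of it for the integral), any permutation in $S_{R_1}$ — where $R_1$ is the first row, i.e.\ the complement of $\{j\}$ in $\{1,\dots,n\}$ but actually $R_1 = \{1,2,\dots\} \setminus \{j\}$ together with... — acts on $\QI$ in a controlled way. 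The key point: a transposition fixing both integration limits $x_1$ and $x_j$ leaves $\QI$ invariant, since it only permutes the symmetric integrand. So I would show $[R_i]\QI = |R_i|! \cdot \QI$ for the rows $R_i$ that don't involve the distinction between $1$ and $j$, and handle the row containing the box that could swap things carefully. In the $[n-1,1]$ case, the second row $R_2 = \{j\}$ contributes trivially ($[R_2] = 1$), and the first row $R_1$ contains $1$ (the corner box $(1,1)$) along with all of $\{2,\dots,n\}\setminus\{j\}$. Since $x_1$ is an integration limit, not every permutation of $R_1$ fixes $\QI$; rather, permutations moving $1$ will change it.

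The cleaner route, which I would actually pursue, is: rather than computing $P(T)\QI$ directly, use the factorization $\gamma_T = \frac{f_\lambda}{n!} N(T) P(T)$ and instead verify the two defining properties of lying in $\gamma_T \mathbf{R}$. Recall $\gamma_T \mathbf{R}$ is the image of an idempotent; $P \in \gamma_T \mathbf{R}$ iff $\gamma_T P = P$. An equivalent, often easier characterization: $N(T) \mathbf{R} \cap P(T) \mathbf{R} \supseteq \gamma_T \mathbf{R}$, and one checks that $\QI$ is antisymmetric under $N(T) = 1 - (1,j)$ (i.e.\ $(1,j)\QI = -\QI$) and symmetric under $P(T)$ (i.e.\ fixed by $[R_i]$ up to the appropriate scalar). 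The antisymmetry $(1,j)\QI = -\QI$ is immediate from swapping the integration limits: $\int_{x_j}^{x_1} = -\int_{x_1}^{x_j}$, and the integrand is symmetric in $x_1 \leftrightarrow x_j$. For the row-symmetry, the subtle point is that $P(T)$ involves $[R_1]$ where $R_1$ contains the index $1$; a transposition $(1, a)$ with $a \in R_1$, $a \neq 1$, does \emph{not} fix $\QI$ since it changes a limit of integration. So I must be more careful: I would use that $\gamma_T \mathbf{R} = e_T \mathbf{R}$ where $e_T$ can be taken as $P(T') N(T')$ for a conjugate tableau, or invoke that for the projection it suffices that $\QI$ generates (under $S_n$) a copy of $V_\lambda$ and transforms in the $T$-isotypic way, i.e.\ $\alpha_{i,\mathbf{k_j}}$-type invariance from Lemma~\ref{thm1crux} combined with the column-antisymmetry.

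The main obstacle, I expect, is pinning down exactly which group-algebra element to apply and in which order, because $P(T)$ for the hook $[n-1,1]$ mixes the "frozen" index $1$ with the symmetric indices in its row. The safest approach is probably: show directly that $(1 - (1,j))\QI = 2\QI$ (antisymmetry under the single column transposition), show that $\QI$ is invariant under all transpositions $(a,b)$ with $a,b \in \{2,\dots,n\}\setminus\{j\}$ (these fix both limits and permute the symmetric integrand), and show the $\alpha$-invariance $\alpha_{1, \mathbf{2_j}}\QI = \QI$ from Lemma~\ref{thm1crux}'s mechanism — namely $\bigl(1 - \sum_{a} (a, j)\bigr)$ acting appropriately where the sum is over the column — which, combined with the row relations, forces $\gamma_T \QI = \QI$ by the same group-algebra manipulation (Lemmas~\ref{Sndecomp}, \ref{zeroprod}, \ref{thm1crux}) used to establish that $\gamma_T \mathbf{R}$ is cut out by these conditions. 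Concretely, I would write $\gamma_T \QI$, use $N(T) = 1-(1,j)$ to get $N(T)\QI = 2\QI$, then push $P(T)$ through: since $\QI$ is already antisymmetric under the column, and $P(T)\QI$ reduces via the coset decomposition of Lemma~\ref{Sndecomp} to a multiple of $\QI$ using the row-invariance and limit-swap facts, the scalar works out to $n!/f_\lambda$, giving $\gamma_T \QI = \QI$.
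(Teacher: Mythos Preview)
Your proposal has the right raw ingredients---the antisymmetry $(1,j)\QI=-\QI$ from swapping integration limits, and the invariance of $\QI$ under $S_{\{2,\dots,\hat{j},\dots,n\}}$---and a direct computation along these lines \emph{does} work for all $m$ at once. But the execution as written has a real gap. First, the order: $\gamma_T=\frac{f_\lambda}{n!}N(T)P(T)$, so you must apply $P(T)$ to $\QI$ \emph{before} $N(T)$, not after. Second, and more seriously, your claim that ``$P(T)\QI$ reduces \dots\ to a multiple of $\QI$'' is false. Using Lemma~\ref{Sndecomp} one finds
\[
P(T)\QI=(n-2)!\sum_{a\neq j}\int_{x_a}^{x_j} t^k\prod_{i=1}^n(t-x_i)^m\,dt,
\]
a sum of $n-1$ distinct integrals, not a scalar multiple of $\QI$. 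The collapse to a multiple of $\QI$ only occurs after applying $N(T)=1-(1,j)$: for $a\neq 1,j$ one has $\int_{x_a}^{x_j}-\int_{x_a}^{x_1}=\int_{x_1}^{x_j}$, while the $a=1$ term contributes $2\int_{x_1}^{x_j}$, giving $N(T)P(T)\QI=n(n-2)!\,\QI$ and hence $\gamma_T\QI=\QI$ since $f_\lambda=n-1$. The appeal to Lemma~\ref{thm1crux} is a red herring: that lemma is a group-algebra identity about $\gamma_T$ itself and does nothing toward verifying $\gamma_T P=P$ for a specific $P$.

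The paper takes a different route. It does the explicit $P(T)$-then-$N(T)$ computation only in the base case $m=0$, where $Q_T^{k,0}=\frac{x_j^{k+1}-x_1^{k+1}}{k+1}$ and everything is polynomial; it then inducts on $m$ using the recursion of Proposition~\ref{recursion} together with Proposition~\ref{fpq} (symmetric coefficients commute through $\gamma_T$). Your corrected direct argument is arguably cleaner, handling all $m$ uniformly via the integral-limit telescoping above; the paper trades that for a simpler polynomial base case plus the recursion.
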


\begin{proof} We first show the statement is true in the case $m=0$, and then proceed by induction. From the definition of $\QI$ we have
\begin{align}
Q_T^{k,0} &= \int_{x_1}^{x_j} t^k \prod_{i=1}^n (t-x_i)^0 dt \\
&= \frac{x_j^{k+1} - x_1^{k+1}}{k+1}. \label{qtk0}
\end{align}
Thus $Q_T^{k,0}$ is invariant under the transposition $(a,b)$ for
$a,b \in \{2,\dots, \hat{j},\dots, n\}$. This immediately gives
\begin{align} \label{eTing1}
[S_{ \{2,\dots, \hat{j}, \dots, n\}}] Q_T^{k,0} = (n-2)! Q_T^{k,0}.
\end{align}
Now, $P(T) = [S_{ \{1,2,\dots, \hat{j},\dots, n\} }]$ and expanding
this according to Lemma~\ref{Sndecomp} yields
\begin{align} \label{eTing2}
P(T) = \bigg( 1 + (1,2) +  \cdots + \widehat{(1,j)} + \cdots (1,n)
\bigg) [S_{ \{2,\dots, \hat{j}, \dots, n\} }].
\end{align}
Using (\ref{qtk0}), (\ref{eTing1}) and (\ref{eTing2}) and performing
a simple calculation, we obtain
\begin{align}
P(T) Q_T^{k,0} &= [S_{ \{1,2,\dots, \hat{j},\dots, n\} }] \frac{x_j^{k+1} - x_1^{k+1}}{k+1} \\
&= \bigg( 1 + (1,2) + \cdots + \widehat{(1,j)} + \cdots (1,n) \bigg) [S_{ \{2,\dots, \hat{j}, \dots, n\} }] \frac{x_j^{k+1} - x_1^{k+1}}{k+1} \\
&= \frac{(n-2)!}{k+1} \bigg( 1 + (1,2) + \cdots + \widehat{(1,j)} + \cdots (1,n) \bigg) (x_j^{k+1} - x_1^{k+1}) \\
&= \frac{(n-2)!}{k+1}  \bigg( (n-1) ( x_j^{k+1} ) - x_1^{k+1} -
(x_2^{k+1} + \dots + \widehat{x_j^{k+1}} + \dots + x_n^{k+1} )
\bigg) \label{eTing3}.
\end{align}
Since $N(T) = \bigg( 1 - (1,j) \bigg)$ we can use (\ref{eTing3}) to
get
\begin{align}
N(T)P(T) Q_T^{k,0} &= \frac{(n-2)!}{k+1} \bigg( 1 - (1,j) \bigg) \bigg( (n-1) x_j^{k+1} - x_1^{k+1} - (x_2^{k+1} + \dots + \widehat{x_j^{k+1}} + \dots + x_n^{k+1} ) \bigg) \\
&= \frac{(n-2)!}{k+1} \bigg( (n-1) (x_j^{k+1} - x_1^{k+1}) - (x_1^{k+1} - x_j^{k+1}) - 0 \bigg) \\
&= \frac{n(n-2)!}{k+1} (x_j^{k+1} - x_1^{k+1}). \label{eTing4}
\end{align}
Finally, we use (\ref{eTing4}) and the fact $f_\lambda = (n-1) = {n!
\over n(n-2)!}$ to reach the desired conclusion
\begin{align*}
\gamma_T Q_T^{k,0} &= \frac{n!}{f_\lambda} N(T)P(T)Q_T^{k,0} \\
&= {(x_j^{k+1} - x_1^{k+1}) \over k+1} \\
&= Q_T^{k,0}.
\end{align*}
With this in hand, we use Proposition \ref{recursion} to write
\begin{align*}
\gamma_T \QI &= \sum_{i=0}^n \gamma_T~(-1)^i e_i ~\Qdown.
\end{align*}
Applying Proposition \ref{fpq} 
and induction gives
\begin{align*}
\gamma_T \QI &= \sum_{i=0}^n (-1)^i e_i \big( \gamma_T \Qdown \big) \\
&= \sum_{i=0}^n (-1)^i e_i \big( \Qdown \big) \\
&= \QI.
\end{align*}
\end{proof}

In order to complete our task of showing that $\QI \in \gamma_T
\qss$, we must show that $(x_j - x_1)^{2m+1} \bigm| \Q{k}{m}$. We do
so by proving the following stronger statement:

\begin{Prop}\label{xjkfactor}
For all $k$,
\begin{align*}
\limj \frac{\Q{k}{m}}{(x_j - x_1)^{2m + 1}} =  \frac{(-1)^m
m!^2}{(2m+1)!} x_j^k \prod_{\stackrel{i=2}{i \neq j}}^n (x_j -
x_i)^m.
\end{align*}
\end{Prop}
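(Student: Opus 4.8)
The plan is to analyze the integral $\Q{k}{m} = \int_{x_1}^{x_j} t^k \prod_{i=1}^n (t-x_i)^m\,dt$ near the regime $x_1 \to x_j$ and extract the leading behavior of the vanishing at $x_1 = x_j$. The natural first step is a change of variables that concentrates the integral on a shrinking interval. Write $x_1 = x_j - \epsilon$ and substitute $t = x_j - \epsilon s$, so that $t$ ranges over $[x_1,x_j]$ exactly as $s$ ranges over $[0,1]$, and $dt = -\epsilon\,ds$. Under this substitution $t - x_1 = \epsilon(1-s)$ and $t - x_j = -\epsilon s$, while for $i \neq 1,j$ we have $t - x_i = (x_j - x_i) - \epsilon s \to (x_j - x_i)$ as $\epsilon \to 0$. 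Thus
\begin{align*}
\Q{k}{m} = \epsilon^{2m+1} \int_0^1 (x_j - \epsilon s)^k\,(1-s)^m\,(-s)^m \prod_{\substack{i=2\\ i\neq j}}^n (x_j - x_i - \epsilon s)^m \, ds,
\end{align*}
up to the sign from reversing the limits, which I would track carefully (the $(-1)^m$ in the statement and an overall sign from $dt$ combine to give the stated sign).

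The second step is to divide by $(x_j - x_1)^{2m+1} = \epsilon^{2m+1}$ and take $\epsilon \to 0$ inside the integral. Since the integrand is a polynomial in $\epsilon$ (hence the limit passes through the integral with no convergence subtleties), the limit replaces $(x_j - \epsilon s)^k$ by $x_j^k$ and $(x_j - x_i - \epsilon s)^m$ by $(x_j - x_i)^m$, pulling both out of the integral:
\begin{align*}
\limj \frac{\Q{k}{m}}{(x_j - x_1)^{2m+1}} = (-1)^m\, x_j^k \prod_{\substack{i=2\\ i\neq j}}^n (x_j - x_i)^m \int_0^1 s^m (1-s)^m\, ds.
\end{align*}
The third step is the evaluation of the remaining Beta integral: $\int_0^1 s^m(1-s)^m\,ds = B(m+1,m+1) = \frac{m!\,m!}{(2m+1)!}$, which gives exactly the claimed constant $\frac{(-1)^m m!^2}{(2m+1)!}$.

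The only real care-point — and the step I'd expect to need the most attention — is bookkeeping of signs: the orientation reversal when substituting, the factor $(-s)^m = (-1)^m s^m$, and making sure the convention $\int_{x_1}^{x_j}$ with $x_1 < x_j$ (or the formal polynomial interpretation of the integral as an antiderivative difference, which is what actually matters since everything is polynomial) is handled consistently. It may be cleanest to avoid the substitution language entirely and instead argue purely formally: expand $\prod_{i=1}^n(t-x_i)^m$, integrate term by term to get a polynomial in $x_1,\dots,x_n$, and observe directly that this polynomial vanishes to order exactly $2m+1$ along $x_1 = x_j$ with the stated leading coefficient — but the substitution $t = x_j - (x_j-x_1)s$ makes the computation transparent and is the route I would take, being explicit that all manipulations are identities of polynomials. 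Alternatively, one can verify the Proposition by induction on $m$ using Proposition~\ref{recursion}, checking the base case $m=0$ via formula~(\ref{qtk0}) and propagating the leading-coefficient formula through the recursion; this is a viable backup but is messier than the direct substitution argument.
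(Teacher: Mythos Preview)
Your argument is correct and is genuinely different from the paper's. The paper proceeds by repeated L'H\^opital: using Leibniz's rule for differentiation under the integral sign, each $\partial/\partial x_j$ lowers the exponent on $(t-x_j)$ inside the integrand by one, so after $m$ applications of L'H\^opital the numerator becomes $(-1)^m m!\int_{x_1}^{x_j} t^k \prod_{i\neq j}(t-x_i)^m\,dt$ and the denominator $(2m+1)\cdots(m+2)(x_j-x_1)^{m+1}$; one more application (via the Fundamental Theorem of Calculus) evaluates the integrand at $t=x_j$, and the surviving $(x_j-x_1)^m$ cancels. Your substitution $t = x_j - (x_j-x_1)s$ reaches the same endpoint in a single stroke, reducing the whole question to the Beta integral $\int_0^1 s^m(1-s)^m\,ds = m!^2/(2m+1)!$. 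Your route is shorter and more transparent; the paper's route has the incidental benefit that the intermediate derivative identities it establishes (equations~(\ref{dqt}) and~(\ref{dpqt})) are reused later in Section~\ref{Sec:LmAction} when computing the action of $L_m$. Your remark that all manipulations are identities of polynomials (so passing the limit through the integral is trivial) is exactly the right justification and handles the sign bookkeeping you flagged.
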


\begin{proof}
This proof will rely on Leibniz's integral formula, also known as
the technique of differentiation underneath the integral sign. We
state the rule here for the reader's convenience. For $f(x,y), u(x),
v(x)$ continuous functions we have
\begin{align} \label{underintegral}
\frac{d}{dx} \int_{u(x)}^{v(x)} f(x,y) dy = \bigg(f(x,v(x))\cdot
\frac{\partial v}{\partial x}\bigg) - \bigg(f(x,u(x))\cdot
\frac{\partial u}{\partial x}\bigg) + \int_{u(x)}^{v(x)}
\frac{\partial f}{\partial x}~~ dy
\end{align}
For example, we have
\begin{align}
\ptl{}{x_j} \bigg( \int_{x_1}^{x_j} t^k \prod_{i=1}^n (t - x_i)^m  dt \bigg) &= x_j^k \prod_{i=1}^n (x_j - x_i)^m - 0 + \int_{x_1}^{x_j}  \ptl{}{x_j} \bigg( t^k \prod_{i=1}^n (t - x_i)^m \bigg) \\
&= 0-0+\int_{x_1}^{x_j} (-m) t^k (t-x_j)^{m-1}
\prod_{\stackrel{i=1}{i \neq j}}^n (t - x_i)^m dt. \label{xjk1}
\end{align}
A similar calculation of $\ptl{}{x_l}$ for the cases $l=1$ and $l
\neq 1,j$ gives the more general rule
\begin{align}
\ptl{}{x_l} \bigg( \int_{x_1}^{x_j} t^k \prod_{i=1}^n (t - x_i)^m
dt \bigg) &= \int_{x_1}^{x_j} (-m) t^k (t-x_l)^{m-1}
\prod_{\stackrel{i=1}{i \neq l}}^n (t - x_i)^m dt. \label{dqt}
\end{align}
Repeating this differentiation gives
\begin{align}
\ptl{p}{x_l} \bigg( \int_{x_1}^{x_j} t^k \prod_{i=1}^n (t - x_i)^m
dt \bigg) &= \int_{x_1}^{x_j} (-1)^p (m)_p t^k (t-x_l)^{m-p}
\prod_{\stackrel{i=1}{i \neq l}}^n (t - x_i)^m dt \label{dpqt}
\end{align}
for $p \leq m$. Expanding $Q_T^{k,m}$ according to the definition
gives
\begin{align} \label{biglimit}
\lim_{x_j \rightarrow x_1} \frac{Q_T^{k,m}}{(x_j - x_1)^{2m + 1}} =
\lim_{x_j \rightarrow x_1}  \frac{\int_{x_1}^{x_j} t^k \prod_{i=1}^n
(t-x_i)^m  dt}{(x_j-x_1)^{2m+1}}
\end{align}
which is an indeterminate expression of the form $\frac{0}{0}$.
Applying L'Hopital's rule and evaluating the numerator with
(\ref{xjk1}) gives that expression (\ref{biglimit}) equals
\begin{align*}
& \lim_{x_j \rightarrow x_1} \frac{ \ptl{}{x_j} \bigg( \int_{x_1}^{x_j} t^k \prod_{i=1}^n (t - x_i)^m  dt \bigg ) }{ \ptl{}{x_j} (x_j - x_1)^{2m + 1}} \\
=& \lim_{x_j \rightarrow x_1}  \frac{\int_{x_1}^{x_j} (-m) t^k
(t-x_j)^{m-1} \prod_{\stackrel{i=1}{i \neq j}}^n (t - x_i)^m dt }{
(2m+1)(x_j-x_1)^{2m}}
\end{align*}
which is still indeterminate.  However, after $m$ applications of
L'Hopital's rule we obtain
\begin{align*}
\lim_{x_j \rightarrow x_1}  \frac{ (-1)^m\cdot m! \int_{x_1}^{x_j}
t^k\prod_{\stackrel{i=1}{i \neq j}}^n (t - x_i)^m dt }{
(2m+1)(2m)(2m-1) \cdots (m+2) (x_j - x_1)^{m+1}}
\end{align*}
and one more application of L'Hopital's rule, evaluated this time
with the Fundamental Theorem of Calculus, yields
\begin{align}
\lim_{x_j \rightarrow x_1}  \frac{ (-1)^m\cdot m! \cdot x_j^k
\prod_{\stackrel{i=1}{i \not = j}}^n (x_j-x_i)^m }{ (2m+1)(2m)(2m-1)
\cdots (m+1) (x_j - x_1)^m}.
\end{align}
We now cancel the term $(x_j-x_1)^m$ from both numerator and
denominator and the Proposition is proven.
\end{proof}

The polynomiality of $\limj \frac{\Q{k}{m}}{(x_j - x_1)^{2m + 1}}$
immediately gives that ${(x_j - x_1)^{2m + 1}} \big| \Q{k}{m}$.

\begin{Prop} \label{quasi}
The polynomial $\QI \in \gamma_T \qss$ for all $k, m$.
\end{Prop}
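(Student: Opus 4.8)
The plan is to read this proposition off directly from Theorem~\ref{Thm:Main} together with the two propositions just established, with essentially no new work. By the identity~(\ref{eq:gammaTIntersection}) proved in the course of Theorem~\ref{Thm:Main}, showing $\QI \in \gamma_T \qss$ is equivalent to verifying the two membership conditions $\QI \in \gamma_T {\bf R}$ and $\QI \in V_T^{2m+1}{\bf R}$. Before checking either, I would record the small but necessary observation that $\QI$ genuinely lies in ${\bf R} = \q[x_1,\dots,x_n]$: the integrand $t^k\prod_{i=1}^n(t-x_i)^m$ is a polynomial in $t$ with coefficients in $\q[x_1,\dots,x_n]$, so it has a polynomial antiderivative in $t$, and evaluating that antiderivative at $t=x_j$ and $t=x_1$ produces a polynomial in the $x_i$.

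The first condition is immediate: Proposition~\ref{eTing} states precisely that $\gamma_T \QI = \QI$, and since $\gamma_T$ is idempotent this exhibits $\QI$ as an element of $\gamma_T{\bf R}$. For the second condition I would first unwind what $V_T$ is for the hook shape $[n-1,1]$. With $T$ standard of shape $[n-1,1]$ and $j$ its second-row entry, the first column of $T$ consists of the entries $1$ and $j$ while every other column is a single box, so $C(T)=\{(1,j)\}$ and $V_T = x_1-x_j$ up to sign; in particular $V_T^{2m+1}{\bf R} = (x_j-x_1)^{2m+1}{\bf R}$ as ideals of ${\bf R}$. Hence it is enough to know that $(x_j-x_1)^{2m+1}\mid \QI$, and this is exactly the content of the remark following Proposition~\ref{xjkfactor}: that proposition computes $\limj \QI/(x_j-x_1)^{2m+1}$ to be an explicit polynomial, and a polynomial whose quotient by a power of a linear form has a finite limit on the hyperplane $x_1=x_j$ is divisible by that power. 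Combining the two conditions via Theorem~\ref{Thm:Main} then gives $\QI\in\gamma_T\qss$ for all $k$ and $m$.

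The point to be careful about — there is no real obstacle remaining — is the identification of $C(T)$ and hence $V_T$ for the shape $[n-1,1]$: it is precisely because this shape has a single non-trivial column that the full intersection condition of Theorem~\ref{Thm:Main} collapses to the single divisibility statement handled by Proposition~\ref{xjkfactor}, and all of the genuine analytic input (the repeated applications of Leibniz's rule and of L'Hopital's rule) has already been carried out there. For a shape with more than one non-trivial column one would instead need divisibility by a product of several such powers, and the corresponding analysis would be more involved; here the whole proof amounts to assembling \ref{eTing}, \ref{xjkfactor}, and \ref{Thm:Main}.
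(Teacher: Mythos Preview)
Your proof is correct and follows exactly the paper's own argument: invoke Theorem~\ref{Thm:Main} (equivalently identity~(\ref{eq:gammaTIntersection})) to reduce to the two conditions $\QI\in\gamma_T{\bf R}$ and $\QI\in V_T^{2m+1}{\bf R}$, and dispatch these via Propositions~\ref{eTing} and~\ref{xjkfactor} respectively. Your added remarks that $\QI$ is indeed a polynomial and that $V_T=(x_1-x_j)$ for shape $[n-1,1]$ are helpful clarifications, but the approach is the same.
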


\begin{proof}
By Theorem \ref{Thm:Main} we have that $\gamma_T \qss = \gamma_T{\bf
R} \cap V_T^{2m+1}{\bf R}$.  Hence the result is proved by the
previous two propositions.
\end{proof}

We now show that the polynomials $Q_T^{k,m}$ form a basis for the
hook shape $[n-1,1]$.
For this proof, we use Felder and Veselov's Hilbert series result,
as stated in Equation (\ref{eq:Hilb}).
Furthermore, they show in \cite{FV2} that $\qss^*$ affords the
left-regular representation, so that one can break up a basis for
$\qss^*$ into a set of bases for the various isotypic components. In
particular, this shows for $T$ of shape $[n-1,1]$ that the
projection of the quotient $\gamma_T \qss^*$ has Hilbert series
given by
$$\sum_{k=0}^{n-2} q^{mn+1+k}.$$  With this result in mind, we now
prove the following main theorem.

\begin{Thm} \label{Thm:Basis}
The set $$\{Q_T^{0,m}, Q_T^{1,m}, Q_T^{2,m}, \dots, Q_T^{n-2,m}\}
$$ is a basis for $\gamma_T \qss^*$.  
\end{Thm}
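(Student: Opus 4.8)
The plan is to reduce the theorem, via the Hilbert series of $\gamma_T\qss^*$ recalled above and Proposition~\ref{quasi}, to a single nonvanishing statement, which is then settled by an explicit evaluation at a root of unity.

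First note that the integrand $t^k\prod_{i=1}^n(t-x_i)^m$ is homogeneous of degree $k+mn$, so $Q_T^{k,m}$ is homogeneous of degree $mn+1+k$; by Proposition~\ref{quasi} it lies in $\gamma_T\qss$, hence its class in $\gamma_T\qss^*$ lies in the degree-$(mn+1+k)$ component. Since $\gamma_T\qss^*$ has Hilbert series $\sum_{k=0}^{n-2}q^{mn+1+k}$, it is $(n-1)$-dimensional with a one-dimensional piece in each of the degrees $mn+1,\dots,mn+n-1$ and nothing else. The $n-1$ classes $\overline{Q_T^{0,m}},\dots,\overline{Q_T^{n-2,m}}$ therefore occupy pairwise distinct graded pieces, so they are automatically linearly independent --- and hence a basis --- once each one is shown to be nonzero. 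Thus the entire content of the theorem is the assertion that $Q_T^{k,m}\notin\langle e_1,\dots,e_n\rangle$ for $0\le k\le n-2$.

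Suppose to the contrary that $Q_T^{k,m}=\sum_{i=1}^n e_i g_i$ with each $g_i\in\qss$, which we may take homogeneous of degree $mn+1+k-i$. Applying the idempotent $\gamma_T$ and invoking Proposition~\ref{fpq} together with Proposition~\ref{eTing} gives $Q_T^{k,m}=\sum_{i=1}^n e_i(\gamma_T g_i)$ with $\gamma_T g_i\in\gamma_T\qss$; but $\gamma_T\qss$ has no nonzero element of degree below $mn+1$ (its minimal degree equals that of the quotient $\gamma_T\qss^*$, namely $mn+1$, since the submodule $\sum_i e_i\gamma_T\qss$ lives in strictly higher degrees), so $\gamma_T g_i=0$ for $i>k$ and therefore $Q_T^{k,m}=\sum_{i=1}^k e_i(\gamma_T g_i)$. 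Now evaluate at the point $\mathbf{a}$ that sends $(x_1,\dots,x_n)$ bijectively onto the $n$-th roots of unity with $x_1\mapsto 1$ and $x_j\mapsto\omega$ for a fixed primitive root $\omega$. Since $\prod_{i=1}^n(t-a_i)=t^n-1$ we have $e_\ell(\mathbf{a})=0$ for $1\le\ell\le n-1$, and because $k\le n-2$ this forces $Q_T^{k,m}(\mathbf{a})=\sum_{i=1}^k e_i(\mathbf{a})\,(\gamma_T g_i)(\mathbf{a})=0$. On the other hand, substituting $\mathbf{a}$ into the defining integral and integrating term by term (using $\omega^n=1$) gives $Q_T^{k,m}(\mathbf{a})=\int_1^\omega t^k(t^n-1)^m\,dt=(\omega^{k+1}-1)\int_0^1 s^k(s^n-1)^m\,ds$; here $\omega^{k+1}\ne 1$ because $1\le k+1\le n-1<n$, and the real integral is nonzero because its integrand has constant sign $(-1)^m$ on $(0,1)$. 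This contradiction proves the nonvanishing, and hence the theorem.

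The one place that needs care is the reduction in the last paragraph: it is crucial to project by $\gamma_T$ and use that $\gamma_T\qss$ vanishes below degree $mn+1$ in order to discard the cofactors $g_i$ with $i>k$ --- especially $g_n$, which for $m\ge 1$ is a genuine nonzero quasiinvariant whose value at $\mathbf{a}$ is not otherwise controlled. Without this step the evaluation at $\mathbf{a}$ would not force $Q_T^{k,m}(\mathbf{a})=0$, and the argument would collapse.
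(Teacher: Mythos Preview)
Your proof is correct and takes a genuinely different route from the paper's. Both arguments begin identically: use the Hilbert series $\sum_{k=0}^{n-2}q^{mn+1+k}$ and Proposition~\ref{quasi} to reduce to showing that $Q_T^{k,m}\notin\langle e_1,\dots,e_n\rangle$ for $0\le k\le n-2$. From there the paper invokes Proposition~\ref{xjkfactor}, dividing a putative relation $Q_T^{k,m}+A_1Q_T^{k-1,m}+\cdots+A_kQ_T^{0,m}=0$ by $(x_j-x_1)^{2m+1}$, passing to the limit $x_j\to x_1$, and concluding by a divisibility argument that the resulting polynomial must have degree at least $n-1$. Your approach instead bypasses Proposition~\ref{xjkfactor} entirely: you project by $\gamma_T$ and use the minimal-degree fact about $\gamma_T\qss$ to kill the cofactors of $e_{k+1},\dots,e_n$, then specialize to the $n$-th roots of unity, where $e_1,\dots,e_{n-1}$ vanish and the identity $\int_1^\omega t^k(t^n-1)^m\,dt=(\omega^{k+1}-1)\int_0^1 s^k(s^n-1)^m\,ds$ (coming from $\omega^{nj+k+1}=\omega^{k+1}$ term by term in the antiderivative) gives a manifestly nonzero value. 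Your argument is more self-contained and the handling of the $e_n$ term via projection is a nice point; the paper's approach, on the other hand, actually exhibits the leading coefficient of $Q_T^{k,m}/(x_j-x_1)^{2m+1}$, which is of independent interest beyond the nonvanishing.
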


\begin{proof}
We first note that $Q_T^{k,m}$ has degree $mn+k+1$, and in
particular, each of these elements are of different degrees, and
matching that of the Hilbert series. Since the set $S = \{Q_T^{0,m},
Q_T^{1,m}, Q_T^{2,m}, \dots, Q_T^{n-2,m}\}$ has size $n-1$, proving
$S$ is linearly independent in $\gamma_T \qss^*$ shows that $S$ is a
basis for $\gamma_T \qss^*$.

Since the quotient $\qss^*$ is graded and the polynomials $Q_T^{k,m}$ are of
different degrees as $k$ varies, it suffices to show that
$Q_T^{k,m}$ is nonzero in the quotient for $0 \le k \le n-2$.  Put
another way, we must show that $Q_T^{k,m}$ is not in the ideal of
$\gamma_T \qss$ generated by $\langle e_1, \dots, e_n \rangle$.
Equivalently we must show that polynomials of the form
$$ P_k = Q_T^{k,m} + A_1Q_T^{k-1,m} + \dots + A_{k-1}Q_T^{1,m} + A_k Q_T^{0,m}$$
(where the $A_i$ are symmetric functions of degree $i$) can only equal
$0$ if $k \ge n-1$.

In fact, we use the explicit formulas  for
$\lim_{x_j\rightarrow x_1}Q_T^{k,m}/V_T^{2m+1}$
given by Proposition~\ref{xjkfactor} to show the stronger statement
 $$\lim_{x_j\rightarrow x_1} P_k/V_T^{2m+1} = 0 \implies k \ge n-1
 $$
regardless of the choice of the symmetric functions.  Letting
$\widetilde{A_i}$ denote the limit $x_j\rightarrow x_1$ applied to the
symmetric function $A_i$, and assuming w.l.o.g. that $j=2$, we have
\begin{align*}
\lim_{x_2\rightarrow x_1} P_k/V_T^{2m+1} &= 0 \\
\implies \bigg({(-1)^m m!^2 \over (2m+1)!} \prod_{i=3}^n
(x_1-x_i)^m \bigg)\bigg( x_1^k + \widetilde{A_1}x_1^{k-1} + \dots +
\widetilde{A_{k-1}}x_1 + \widetilde{A_k}\bigg) &= 0 \\
\implies x_1^k + \widetilde{A_1}x_1^{k-1} + \dots +
\widetilde{A_{k-1}}x_1 + \widetilde{A_k} &= 0\\
\implies \lim_{x_2 \to x_1} \left( x_1^k + A_1 x_1^{k-1} + \dots + A_k
\right) &= 0
\end{align*}
Setting
\begin{align*}
Q(x_1,\dots, x_n) &= x_1^k + A_1x_1^{k-1} + \dots + A_k \\
\intertext{we must have}
Q(x_1,\dots, x_n) &= (x_2-x_1)\cdot R(x_1,\dots, x_n)
\end{align*}
However, $Q$ must be symmetric with respect to all pairs of variables
not involving $x_1$.  Thus, for any $\sigma \in S_{\{2,3,\dots, n\}}$, 
$\sigma Q = Q$
and so $$Q(x_1,\dots, x_n) = \sigma Q(x_1,\dots, x_n) =
(x_{\sigma(2)}-x_1)\cdot \sigma R(x_1,\dots, x_n),$$ and so
$\prod_{i=2}^n (x_i-x_1)$ divides $Q(x_1,\dots, x_n)$. Consequently,
$k$, which is the degree of $Q(x_1,\dots, x_n)$, must be greater
than or equal to $n-1$.
\end{proof}

\section{A More Explicit Description of $Q_T^{k,m}$} \label{Sec:EvalIntegrals}

We now know that the set $\{Q_T^{k,m}\}_{k=0}^{n-2}$ is indeed a basis
for $\gamma_T \qss^*$.  In this section we show an even more explicit
formula for the $Q_T^{k,m}$'s.   Throughout, we shall assume without
loss that the element in the second row of $T$ (which we have been
calling $j$) is $2$. Since $V_T = (x_2-x_1)$ divides $\QI$, we change
variables to understand $\QI$ from a more combinatorial point of view.
Namely, we expand with respect to the variables
$$ Z = \{x_1,\widehat{x_2},\dots, x_n, z = x_2-x_1\}.$$
This is in contrast to the usual set of variables
$$ X = \{x_1,x_2,\dots, x_n\} .$$

\begin{Thm} \label{newdecomp}
  The coefficient of $(x_2-x_1)^r = z^r$ in $\QI$ (when expanded with respect
  to $Z$) is
  \begin{eqnarray*}
        &{m! \over r(r-1)(r-2)\cdots (r-m)}& \sum_{i_3=0}^m \sum_{i_4=0}^m
        \cdots \sum_{i_n=0}^m (-1)^{m+i_3+i_4+\dots + i_n} 
        {m \choose i_3}{m\choose i_4}\cdots {m\choose i_n}\times \\
        && \hspace{-6em} {k+m(n-2)-i_3-i_4-\dots -i_n \choose r-(2m+1)}
        x_1^{(k+m(n-2)-i_3-i_4-\dots-i_n)-(r-(2m+1))}x_3^{i_3}x_4^{i_4}\cdots
        x_n^{i_n}.
\end{eqnarray*}
\end{Thm}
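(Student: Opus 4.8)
\section*{Proof proposal for Theorem~\ref{newdecomp}}

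The plan is to prove this by a direct computation, separating out the ``spectator'' variables $x_3,\dots,x_n$ so that the problem collapses to a single one-variable integral of Beta type. Working (as already assumed) with $j=2$, I would first apply the binomial theorem to each factor $(t-x_i)^m$ with $i\ge 3$, writing
$$\prod_{i=3}^n (t-x_i)^m = \sum_{i_3=0}^m\cdots\sum_{i_n=0}^m \left(\prod_{\ell=3}^n \binom{m}{i_\ell}(-1)^{i_\ell} x_\ell^{i_\ell}\right) t^{\,m(n-2)-(i_3+\cdots+i_n)}.$$
Substituting into $Q_T^{k,m}=\int_{x_1}^{x_2} t^k (t-x_1)^m (t-x_2)^m \prod_{i=3}^n (t-x_i)^m\,dt$ and pulling the monomials $x_3^{i_3}\cdots x_n^{i_n}$ (which do not involve $t$) outside the integral, the problem reduces to understanding, for each choice of $(i_3,\dots,i_n)$, the integral $I_K=\int_{x_1}^{x_2} t^K (t-x_1)^m (t-x_2)^m\,dt$ with $K=k+m(n-2)-i_3-\cdots-i_n$, and in particular its coefficient as a power series in $x_1$ and $z=x_2-x_1$.

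To handle $I_K$ I would change variables by $t=x_1+zs$, so that $t$ runs from $x_1$ to $x_2=x_1+z$ as $s$ runs from $0$ to $1$, and $(t-x_1)^m=z^m s^m$, $(t-x_2)^m=z^m(s-1)^m$, $dt=z\,ds$, giving
$$I_K = z^{2m+1}\int_0^1 (x_1+zs)^K s^m (s-1)^m\,ds.$$
Expanding $(x_1+zs)^K=\sum_{a=0}^K \binom{K}{a} x_1^{K-a} z^a s^a$ and collecting the power $z^r$ forces $a=r-(2m+1)$, so the coefficient of $z^r$ in $I_K$ is $\binom{K}{r-(2m+1)}\, x_1^{\,K-(r-(2m+1))}\int_0^1 s^{\,r-m-1}(s-1)^m\,ds$. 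Finally, using $(s-1)^m=(-1)^m(1-s)^m$ and the standard evaluation $\int_0^1 s^p(1-s)^m\,ds=\frac{p!\,m!}{(p+m+1)!}$ with $p=r-m-1$,
$$\int_0^1 s^{\,r-m-1}(s-1)^m\,ds = (-1)^m\frac{(r-m-1)!\,m!}{r!} = \frac{(-1)^m\,m!}{r(r-1)(r-2)\cdots(r-m)}.$$
Assembling the three pieces --- the extracted monomials $\prod_{\ell}\binom{m}{i_\ell}(-1)^{i_\ell}x_\ell^{i_\ell}$, the factor $\binom{K}{r-(2m+1)}x_1^{K-(r-(2m+1))}$ from the expansion of $(x_1+zs)^K$, and this last Beta value --- reproduces the stated formula term by term; the overall sign $(-1)^{m+i_3+\cdots+i_n}$ is just the product of the $(-1)^m$ coming from the Beta integral and the $\prod_\ell (-1)^{i_\ell}$ coming from the binomial expansions.

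There is no real obstacle here: it is a bookkeeping exercise. The two points deserving a little care are the order of operations --- I would expand the $(t-x_i)^m$ with $i\ge 3$ \emph{before} making the substitution $t=x_1+zs$, so that the $x_\ell$'s come out as clean monomials and only $t^K$ needs to be transformed --- and the edge behaviour in $r$: when $r<2m+1$ the binomial $\binom{K}{r-(2m+1)}$ has negative lower argument and vanishes, which is precisely consistent with the divisibility $V_T^{2m+1}=(x_2-x_1)^{2m+1}\mid Q_T^{k,m}$ established via Propositions~\ref{quasi} and~\ref{xjkfactor}, so no separate case is needed. One should also record the elementary identity $r!=r(r-1)\cdots(r-m)\cdot(r-m-1)!$ used to pass from the factorial form of the Beta value to the falling-factorial denominator in the statement.
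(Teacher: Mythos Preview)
Your argument is correct and cleaner than the one in the paper. Both proofs begin the same way, expanding the ``spectator'' factors $(t-x_i)^m$ for $i\ge 3$ by the binomial theorem to reduce to integrals of the form $\int_{x_1}^{x_2} t^K(t-x_1)^m(t-x_2)^m\,dt$. From there the paper takes a detour: it introduces an auxiliary $(m+1)$-fold iterated integral (one that lacks the factor $(t-x_2)^m$ entirely), evaluates that by repeatedly integrating powers of $(t_0-x_1)$, and then must separately prove that this iterated integral equals $Q_T^{k,m}$ via a differentiation argument (matching $\partial^m/\partial x_2^m$ of both sides and invoking the known divisibility by $z^{2m}$). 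Your substitution $t=x_1+zs$ bypasses all of this, collapsing the problem directly to the Euler Beta integral $\int_0^1 s^{r-m-1}(1-s)^m\,ds$; the factor $(t-x_2)^m$ is handled automatically rather than removed and reinstated. What the paper's route buys is that the inner computation never leaves the realm of iterated antiderivatives of monomials, so one need not recognise or quote the Beta evaluation; what your route buys is a one-step argument with no auxiliary object and no separate identification step.
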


\begin{proof}
  We begin by evaluating the integral 
  \begin{align} \label{2ndint}
        \int_{t_m=x_1}^{x_2} \int_{t_{m-1}=x_1}^{t_m}\cdots
        \int_{t_0=x_1}^{t_1} (-1)^m m!~t_0^k 
        \prod_{\stackrel{i=1}{i \neq 2}}^n (t_0-x_i)^m~ dt_0 \cdots dt_m.
  \end{align} 
  We will then show that this integral is another way of writing
  $Q_T^{k,m}$.  We begin our evaluation of (\ref{2ndint}) by expanding
  each of the $(t_0-x_i)^m$ for $i\geq 3$ by the binomial theorem,
  thus obtaining
  \begin{align*}
        \int_{t_m=x_1}^{x_2} &\int_{t_{m-1}=x_1}^{t_m} \cdots \int_{t_0=x_1}^{t_1} 
        (-1)^m m!~t_0^k(t_0-x_1)^m \bigg( \sum_{i_3=0}^m (-1)^{i_3} {m\choose i_3}
        t_0^{m-i_3} x_3^{i_3}\bigg) \times \\ 
        &\bigg( \sum_{i_4=0}^m (-1)^{i_4} {m\choose i_4} t_0^{m-i_4} x_4^{i_4}\bigg)
        \cdots \bigg( \sum_{i_n=0}^m (-1)^{i_n} {m\choose i_n} t_0^{m-i_n}
        x_n^{i_n}\bigg) ~ dt_0 \cdots dt_m.
  \end{align*}
  This quantity simplifies to
  \begin{align*}
        \int_{t_m=x_1}^{x_2} & \int_{t_{m-1}=x_1}^{t_m}\cdots 
        \int_{t_0=x_1}^{t_1} m!~ t_0^k (t_0-x_1)^m 
        \bigg(\sum_{i_3=0}^m\sum_{i_4=0}^m\cdots \sum_{i_n=0}^m 
        (-1)^{m+i_3+i_4+\dots + i_n}\times \\ 
        &{m\choose i_3}{m\choose i_4}\cdots {m\choose i_n}x_3^{i_3}x_4^{i_4}
        \cdots x_n^{i_n}~~\cdot~~t_0^{m\cdot (n-2) - i_3 - i_4-\dots -i_n}\bigg)
        ~dt_0 \cdots dt_m,
  \end{align*}
  and by rearranging we obtain
  \begin{align*}
        m!&~\sum_{i_3=0}^m\sum_{i_4=0}^m\cdots \sum_{i_n=0}^m
        (-1)^{m+i_3+i_4+\dots + i_n}{m\choose i_3}{m\choose i_4}\cdots
        {m\choose i_n} x_3^{i_3}x_4^{i_4}\cdots x_n^{i_n} \times \\
        & \bigg(\int_{t_m=x_1}^{x_2} \int_{t_{m-1}=x_1}^{t_m}\cdots
        \int_{t_0=x_1}^{t_1} t_0^{k +m(n-2) - i_3 - i_4-\dots -i_n} (t_0-x_1)^m
        ~dt_0 \cdots dt_m\bigg).
  \end{align*}
  For convenience of notation we let $K = k +m\cdot (n-2) - i_3 -
  i_4-\dots -i_n$, allowing us to write the above as
  \begin{align*}
        m!&~\sum_{i_3=0}^m\sum_{i_4=0}^m\cdots \sum_{i_n=0}^m
        (-1)^{m+i_3+i_4+\dots + i_n}{m\choose i_3}{m\choose i_4}\cdots
        {m\choose i_n} x_3^{i_3}x_4^{i_4}\cdots x_n^{i_n} \times \\ 
        &\bigg(\int_{t_m=x_1}^{x_2} \int_{t_{m-1}=x_1}^{t_m}\cdots 
        \int_{t_0=x_1}^{t_1} t_0^{K}(t_0-x_1)^m ~dt_0 \cdots dt_m\bigg).
  \end{align*}
  At this point, we rewrite $t_0^K$ as $(x_1+(t_0-x_1))^K$, which
  allows us to simplify $t_0^K (t_0-x_1)^m$ as  $\sum_{R=0}^{K} {K
  \choose R} x_1^{K-R} (t_0-x_1)^{R+m},$ hence we conclude
  (\ref{2ndint}) equals
  \begin{align*}
        m!&~\sum_{i_3=0}^m\sum_{i_4=0}^m\cdots \sum_{i_n=0}^m
        (-1)^{m+i_3+i_4+\dots + i_n}{m\choose i_3}{m\choose i_4}\cdots
        {m\choose i_n} \times \\
        &\sum_{R=0}^K x_1^{K-R}\cdot x_3^{i_3}x_4^{i_4}\cdots x_n^{i_n}
        \bigg(\int_{t_m=x_1}^{x_2} \int_{t_{m-1}=x_1}^{t_m}\cdots 
        \int_{t_0=x_1}^{t_1}(t_0-x_1)^{m+R} ~dt_0 \cdots dt_m\bigg),
  \end{align*}
  and the inside integral is easily seen to evaluate to
  $${(x_2-x_1)^{2m+1+R}\over (R+2m+1)(R+2m)\cdots (R+m+1)}.$$
  Finally, we let $r = R +(2m+1)$, i.e. $R = r-(2m+1)$, so that $r$
  signifies the power of $z=(x_2-x_1)$ in the expression. Thus the
  coefficient of $z^r$ is as claimed in the statement of the theorem.
  
  It remains to show that $Q_T^{k,m}$ is in fact equal to the quantity
  in (\ref{2ndint}).  We note that the argument above shows that
  $z^{2m}$ divides (\ref{2ndint}).  We also know from
  Proposition~\ref{xjkfactor} that $z^{2m}$ divides $Q_T^{k,m}$.
  Thus, showing
  $$ \frac{\partial^m}{\partial z^m} Q_T^{k,m} =
  \frac{\partial^m}{\partial z^m} (\ref{2ndint}) $$
  shows equality of $Q_T^{k,m}$ and (\ref{2ndint}). Furthermore, the
  operator $\frac{\partial}{\partial z}$ applied to a polynomial in
  the generating set $Z$ is equivalent to the operator
  $\frac{\partial}{\partial x_2}$ applied to the same polynomial in
  the generating set $X$.  Thus, we will show $Q_T^{k,m} =
  (\ref{2ndint})$ by showing
  \begin{align} \label{eq:expandgoal}
    \frac{\partial^m}{\partial x_2^m} Q_T^{k,m} =
    \frac{\partial^m}{\partial x_2^m} (\ref{2ndint}) 
  \end{align}
  For the LHS, consider the function $f(t) = t^k \prod_{i=1}^n
  (t-x_i)^m.$  As in the previous section, we use Leibniz's formula to
  obtain
  \begin{align*}
        {\partial \over \partial x_2} \int_{t=x_1}^{x_2} f(t)~ dt
        &= f (x_2) + \int_{t=x_1}^{x_2} \bigg(\ddtw f(t)\bigg) dt \\
        &= \hspace{3.5em} \int_{t=x_1}^{x_2} \bigg(\ddtw f(t)\bigg)
        dt.
  \end{align*}
  After iterating $m$ times, we obtain 
  \begin{align*} 
        \ddtwm \int_{x_1}^{x_2} t^k \prod_{i=1}^n (t-x_i)^m dt = (-1)^m
        m!~\int_{x_1}^{x_2} t^k \prod_{\stackrel{i=1}{i\not = 2}}^n
        (t-x_i)^m dt.
  \end{align*}
  For the RHS, we let 
  $$ g(t,m) = \int_{t_{m-1}=x_1}^{t}\cdots \int_{t_0=x_1}^{t_1} 
  (-1)^m m!~t_0^k \prod_{\stackrel{i=1}{i\not= 2}}^n 
  (t_0-x_i)^m~ dt_0 \cdots dt_{m-1},$$
  and note by the Fundamental Theorem of Calculus that
  \begin{align*}
        {\partial \over \partial x_2} \int_{t_m=x_1}^{x_2} 
        g(t, m) ~dt_m &= \int_{t_{m-1}=x_1}^{x_2} g(t, m-1)~dt_{m-1}
  \end{align*}
  since the integrand does not include variable $x_2$.
  Thus 
  \begin{align*}
        &{\ddtwm}\int_{t_m=x_1}^{x_2} \int_{t_{m-1}=x_1}^{t_m} \cdots
        \int_{t_0=x_1}^{t_1} (-1)^m m!~t_0^k 
        \prod_{\stackrel{i=1}{i\not= 2}}^n (t_0-x_i)^m~ dt_0 \cdots dt_m
        \\
        =& (-1)^m m!~\int_{x_1}^{x_2} t^k
        \prod_{\stackrel{i=1}{i\not = 2}}^n (t-x_i)^m dt \\  
        =& {\ddtwm}\int_{x_1}^{x_2} t^k \prod_{i=1}^n
        (t-x_i)^m dt
  \end{align*}
  which establishes (\ref{eq:expandgoal}).
\end{proof}

\section{The Action of Operator $L_m$} \label{Sec:LmAction}

In this section, we discuss a further property of our basis for
$\gamma_T QI_m^*$ for $T$ a standard Young tableau of shape
$[n-1,1]$. In particular, as discussed in \cite{FV,GW} and elsewhere
in the literature, there is a natural family of operators which act
on the quasiinvariants and are denoted by $L_m$.  In particular
these operators are defined, in the symmetric group case, as $$L_m =
\sum_{i=1}^n \frac{\partial^2}{\partial x_i^2} - 2m \sum_{1 \leq i <
j \leq n} \frac{1}{x_i - x_j} \left( \ptl{}{x_i} - \ptl{}{x_j}
\right).$$ The action of $L_m$ on our basis is striking.  In
particular, we obtain the following formulas for this action:
\begin{Thm} \label{Thm:Lm} $L_m(Q_T^{k,m}) = k(k-1)Q_T^{k-2,m}$ for $k \geq 2$ and equals zero for $k = 0$ or $1$.
\end{Thm}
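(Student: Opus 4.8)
The plan is to prove the formula $L_m(Q_T^{k,m}) = k(k-1) Q_T^{k-2,m}$ directly from the integral representation $Q_T^{k,m} = \int_{x_1}^{x_j} t^k \prod_{i=1}^n (t-x_i)^m\,dt$ by computing the action of $L_m$ underneath the integral sign, using Leibniz's integral formula exactly as in the proof of Proposition~\ref{xjkfactor}. The key observation is that the operator $L_m$ is closely related to the second-order differential operator whose eigenfunctions are the relevant integrands. Set $f(t) = t^k \prod_{i=1}^n (t-x_i)^m$ and write $P(t) = \prod_{i=1}^n(t-x_i)$, so $f(t) = t^k P(t)^m$. First I would record, via the differentiation-under-the-integral computations already established in Section~\ref{lequals2} (equations (\ref{dqt}) and (\ref{dpqt})), clean formulas for $\ptl{}{x_l} Q_T^{k,m}$ and $\ptl{2}{x_l} Q_T^{k,m}$; these involve boundary terms only at the $x_j$ endpoint, and those vanish because the integrand $f(t)$ carries a factor $(t-x_j)^m$ with $m \geq 1$ (for $l \neq j$) and because differentiating in $x_l$ brings down a factor $(t-x_l)^{m-1}$ still vanishing at $t=x_l$; the $x_1$ endpoint contributes nothing since $\partial x_1/\partial x_l = 0$ except $l=1$, where again the $(t-x_1)$ factor kills it. Similarly the mixed boundary contributions from the two endpoints $x_1$ and $x_j$ in $\partial/\partial x_1$ and $\partial/\partial x_j$ must be tracked, but each is proportional to $f$ evaluated at an endpoint, which vanishes.

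The heart of the argument is then a pointwise identity: I claim that for the function $f(t) = t^k P(t)^m$ in the single variable $t$, one has
\begin{align*}
  \left( \sum_{i=1}^n \ptl{2}{x_i} - 2m \sum_{i<j} \frac{1}{x_i-x_j}\left(\ptl{}{x_i} - \ptl{}{x_j}\right) \right) f \;=\; \frac{d^2}{dt^2}\big( t^k \big)\, P(t)^m \;=\; k(k-1)\, t^{k-2} P(t)^m,
\end{align*}
where on the left the $x_i$-derivatives act on $f$ as a function of the parameters $x_1, \dots, x_n$. The point is that the Calogero–Moser-type operator $L_m$, when applied to a product of the form $g(t) \prod_i (t-x_i)^m$ regarded as a function of the $x_i$, reduces to the plain operator $g \mapsto g''$ acting on $g$ — this is precisely the classical fact (going back to the structure exploited in \cite{FV, GW}) that $\prod_i(t-x_i)^m$ is, up to the shift by $t^k$, a quasiinvariant eigenfunction. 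Concretely, $\ptl{}{x_i}\big( P^m \big) = -m P^m/(t-x_i)$ and $\ptl{2}{x_i}\big(P^m\big) = m(m-1)P^m/(t-x_i)^2$; summing and using the partial-fraction identity $\sum_{i<j}\frac{1}{(t-x_i)(t-x_j)} = \frac{1}{2}\big[(\sum_i \frac{1}{t-x_i})^2 - \sum_i \frac{1}{(t-x_i)^2}\big]$ together with $\sum_{i \neq j}\frac{1}{(x_i-x_j)(t-x_i)} = \text{(symmetric cancellation)}$, all the singular terms cancel and one is left with $L_m(t^k P^m) = k(k-1)t^{k-2}P^m$ identically in $t$. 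Once this pointwise identity is in hand, integrating both sides from $x_1$ to $x_j$ gives $L_m Q_T^{k,m} = \int_{x_1}^{x_j} k(k-1) t^{k-2}\prod_i(t-x_i)^m\,dt = k(k-1) Q_T^{k-2,m}$, and the cases $k=0,1$ give zero since $k(k-1)=0$ and $Q_T^{-1,m}$ never appears. I would also remark that $Q_T^{k,m}$ is only an element of $\qss^*$ modulo $\langle e_1,\dots,e_n\rangle$, but $L_m$ descends to this quotient (it commutes with multiplication by symmetric functions up to lower-order terms, or more simply the identity holds on the nose at the level of polynomials before quotienting), so the statement makes sense as written.

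The main obstacle I anticipate is the bookkeeping of boundary terms when moving $L_m$ through the integral — in particular verifying that every boundary contribution arising from Leibniz's rule for both the $\partial/\partial x_1$ and $\partial/\partial x_j$ pieces, and from the second derivatives, genuinely vanishes. The subtle cases are $l = 1$ and $l = j$, where the endpoint of integration does depend on the differentiation variable, so naive application of (\ref{dpqt}) needs care: one gets a boundary term $f(x_j)$ or $f(x_1)$ (or its derivatives), and one must check these all carry enough vanishing factors $(t - x_l)$ to die. Because $m \geq 1$ (the theorem is about $\qss$ with fixed $m$, and the $m=0$ case is the classical trivial one where $L_0 = \sum \partial^2/\partial x_i^2$ acts on $(x_j^{k+1}-x_1^{k+1})/(k+1)$ giving $k x_j^{k-1} - k x_1^{k-1}$... which should also be checked to equal $k(k-1)Q_T^{k-2,0}$), these vanish, but the argument should be written carefully. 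A secondary, purely computational obstacle is the partial-fraction cancellation establishing the pointwise identity; this is routine but should be presented cleanly, perhaps by first treating $L_m(P^m)$ and then using the Leibniz rule $L_m(t^k P^m) = (t^k)''P^m + 2(t^k)'(P^m)' \cdot(\text{no, } t^k \text{ is not an } x_i\text{-function})$ — actually since $t^k$ does not depend on the $x_i$, it pulls straight out of $L_m$, so $L_m(t^k P^m) = t^k \cdot L_m(P^m)$ plus the contribution of $\sum_i \partial^2/\partial x_i^2$ hitting... no: $t$ is the integration variable, independent of all $x_i$, so $L_m$ acts only on $P(t)^m$ and $t^k$ is a scalar; thus we need $L_m(P^m) = 0$ and separately recover the $k(k-1)t^{k-2}$ from differentiating $t^k$ in $t$ — meaning the claimed identity must instead come from first applying $d^2/dt^2$ inside the integral after an integration by parts, not from $L_m$ alone. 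I would therefore restructure: show $L_m$ commuted past the integral equals $\int_{x_1}^{x_j}\big(\sum_i \partial^2_{x_i} - 2m\sum \cdots\big)(t^k P^m)\,dt$, prove the integrand equals $\frac{d^2}{dt^2}(t^k)\cdot P^m = k(k-1)t^{k-2}P^m$ by the partial-fraction identity (where now the $t$-dependence of $P$ makes the cancellation work precisely because $\partial_{x_i}$ and $\partial_t$ of $(t-x_i)$ differ by a sign), and conclude. Getting this structural point exactly right — which derivatives land where — is the crux; everything after it is mechanical.
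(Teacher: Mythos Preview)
Your proposal contains a genuine error at its core. The pointwise identity you state,
\[
L_m^{(x)}\bigl(t^k P(t)^m\bigr) \;=\; k(k-1)\,t^{k-2}P(t)^m,
\]
is false. Since $t^k$ is independent of the $x_i$, the left-hand side equals $t^k\,L_m^{(x)}(P^m)$, and a short computation (exactly your partial-fraction manipulation) shows
\[
L_m^{(x)}\bigl(P(t)^m\bigr) \;=\; P(t)^m\Bigl[\,m^2\Bigl(\textstyle\sum_i\frac{1}{t-x_i}\Bigr)^{2} - m\sum_i\frac{1}{(t-x_i)^2}\Bigr] \;=\; \frac{\partial^2}{\partial t^2}\bigl(P(t)^m\bigr),
\]
which is \emph{not} zero and is \emph{not} equal to $k(k-1)t^{k-2}P^m/t^k$. (Try $n=2$, $m=1$, $k=2$: the left side is $2t^2$, while $k(k-1)t^{k-2}P^m = 2(t-x_1)(t-x_2)$.) So after commuting $L_m$ past the integral you land at
\[
L_m Q_T^{k,m} \;=\; \int_{x_1}^{x_j} t^k\,\frac{\partial^2}{\partial t^2}\bigl(P(t)^m\bigr)\,dt,
\]
and the factor $k(k-1)t^{k-2}$ only appears after \emph{two integrations by parts in $t$} moving $\partial_t^2$ from $P^m$ onto $t^k$. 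This is the step your plan is missing: you gesture at integration by parts once, but your ``restructured'' final plan reverts to proving the false pointwise equality of integrands.

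Two further points. First, the integration-by-parts boundary terms at $t=x_1,x_j$ must be checked: for $m\ge 2$ both $P^m$ and $\partial_t(P^m)$ vanish there and everything is clean, but for $m=1$ the term $\partial_t P$ does \emph{not} vanish at the endpoints, and the resulting boundary contributions cancel against boundary terms that arise when pushing $\sum_i\partial_{x_i}^2$ through the integral (these are also nonzero when $m=1$). The paper handles $m=1$ as a separate case for exactly this reason; your sketch assumes all boundary terms vanish uniformly, which is not true. Second, your parenthetical check of $m=0$ stops one line short: $L_0 Q_T^{k,0} = k(x_j^{k-1}-x_1^{k-1}) = k(k-1)\cdot\frac{x_j^{k-1}-x_1^{k-1}}{k-1} = k(k-1)Q_T^{k-2,0}$, so that case is fine.
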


The significance of this formula is how $L_m$ acts as second
differentiation with respect to the basis $\{Q_T^{0,m},Q_T^{1,m},
\dots, Q_T^{n-2,m}\}$.  This action naturally generalizes the action
of $L_0 = \sum_{i=1}^n \frac{\partial^2}{\partial x_i^2}$ on the polynomial ring
$QI_0$.

\begin{proof}
We now proceed with the proof of Theorem \ref{Thm:Lm}.  For $m = 0$,
we have $Q_T^{k,0} = \frac{x_j^{k+1} - x_1^{k+1}}{k + 1}$ by
(\ref{qtk0}). Thus
\begin{align*}
L_0 Q_T^{k,0} &= \left( \ptl{2}{x_1} + \ptl{2}{x_j} \right) \left( \frac{x_j^{k+1} - x_1^{k+1}}{k + 1} \right) - 0 \\
&= (k) (x_j^{k-1} - x_1^{k-1}) \\
&= (k)(k-1) Q_T^{k-2,0}.
\end{align*}
We state here some useful identities which are valid for $m \ge 1$.
First, we have
\begin{align}
& \int_{x_1}^{x_j} \ptl{2}{t} \left( t^k \prod_{i=1}^n (t - x_i)^m \right) dt \label{1int} \\
=& \int_{x_1}^{x_j} \left( \ptl{2}{t} t^k \right) \prod_{i=1}^n (t - x_i)^m dt \label{2int}\\
&+ 2 \int_{x_1}^{x_j} \left( \ptl{}{t} t^k \right) \left( \ptl{}{t} \prod_{i=1}^n (t - x_i)^m \right) dt \label{3int}\\
&+ \int_{x_1}^{x_j} t^k \left( \ptl{2}{t} \prod_{i=1}^n (t - x_i)^m
\right) dt \label{4int}.
\end{align}
We also have
\begin{align}& \int_{x_1}^{x_j} \ptl{}{t} \bigg[ t^k \ptl{}{t} \left( \prod_{i=1}^n (t - x_i)^m \right) \bigg] dt
\label{5int} \\ \nonumber =&\int_{x_1}^{x_j} \left( \ptl{}{t} t^k
\right) \left( \ptl{}{t} \prod_{i=1}^n (t - x_i)^m \right) dt +
\int_{x_1}^{x_j} t^k \left( \ptl{2}{t} \prod_{i=1}^n (t - x_i)^m
\right) dt \\ \nonumber =& \frac{1}{2}(\ref{3int}) + (\ref{4int}).
\end{align}
Additionally, we can compute (\ref{2int}) as follows:
\begin{align*}
\int_{x_1}^{x_j} \left( \ptl{2}{t} t^k \right) \prod_{i=1}^n (t -
x_i)^m dt
&= k(k - 1) \int_{x_1}^{x_j} t^{k-2} \prod_{i=1}^n (t - x_i)^m dt   \\
&= k(k - 1) Q_T^{k-2,m}.
\end{align*}
Now, for $m \ge 2$ we recall equations (\ref{dqt}) and (\ref{dpqt})
where we used Leibniz's integral formula to obtain
\begin{align*}
\ptl{}{x_i} (Q_T^{k,m}) &= \int_{x_1}^{x_j}  (-m) t^k (t -
x_i)^{m-1} \prod_{\stackrel{l=1}{l \neq i}}^n (t - x_l)^m dt
\end{align*}
and
\begin{align*}
\ptl{2}{x_i}(Q_T^{k,m}) &= \int_{x_1}^{x_j}  m(m-1) t^k (t -
x_i)^{m-2} \prod_{\stackrel{l=1}{l \neq i}}^n(t - x_l)^m dt.
\end{align*}
Using these we can compute
\begin{align*}
&\sum_{1 \le i < l \le n} \frac{1}{x_i - x_l} \left( \ptl{}{x_i} - \ptl{}{x_l} \right) Q_T^{k,m}\\
=& (-m) \int_{x_1}^{x_j} t^k \sum_{1 \le i < l \le n} \frac{1}{x_i - x_l} \Big( \bigg[ (t - x_i)^{m-1} \prod_{\stackrel{p=1}{p \neq i}}^n (t - x_p)^m \bigg] - \bigg[ (t - x_l)^{m-1} \prod_{\stackrel{p=1}{p \neq l}}^{n} (t - x_p)^m \bigg] \Big) dt \\
=& (-m) \int_{x_1}^{x_j} t^k \sum_{1 \le i < l \le n} \frac{1}{x_i - x_l} \bigg[ (t - x_i)^{m-1} (t-x_l)^{m} - (t - x_i)^m (t - x_l)^{m-1} \bigg] \prod_{\stackrel{p=1}{p \neq i,l}}^{n} (t - x_p)^m dt \\
=& (-m) \int_{x_1}^{x_j} t^k \left[ \prod_{p=1}^{n} (t - x_p)^{m-1} \right] \sum_{1 \le i < l \le n} \bigg[ \frac{(t - x_l) - (t - x_i)}{x_i - x_l} \prod_{\stackrel{p=1}{p \neq i,l}}^{n} (t - x_p) \bigg] dt \\
=& (-m) \int_{x_1}^{x_j} t^k \sum_{1 \leq i < l \leq n} (t -
x_i)^{m-1}(t - x_l)^{m-1} \prod_{\stackrel{p=1}{l \neq i,l}}^{n} (t
- x_l)^m dt
\end{align*}
and hence
\begin{align*}
L_m (Q_T^{k,m}) &= m (m-1) \int_{x_1}^{x_j} t^k \sum_{i=1}^n (t-x_i)^{m-2} \prod_{\stackrel{l=1}{l \neq i}}^n (t - x_l)^m dt \\
&+ 2m^2 \int_{x_1}^{x_j} t^k \sum_{1 \leq i < l \leq n} (t -
x_i)^{m-1}(t - x_l)^{m-1} \prod_{\stackrel{p=1}{p \not = i,j}}^n (t
- x_l)^m dt.
\end{align*}
We recognize this expression as being nothing more than
\begin{align} \label{firstint}
L_m (Q_T^{k,m}) &= \int_{x_1}^{x_j} t^k \bigg( \ptl{2}{t}
\prod_{i=1}^n (t - x_i)^m \bigg) dt.
\end{align}
Now, if we evaluate (\ref{1int}) by the fundamental theorem of
calculus we get
\begin{align*}
&\int_{x_1}^{x_j} \ptl{2}{t} \left( t^k \prod_{i=1}^n (t - x_i)^m \right) dt \\
&= k t^{k-1} \prod_{i=1}^n (t - x_i)^m + t^k \sum_{i=1}^n (t - x_i)^{m-1} \prod_{\stackrel{l=1}{l \neq i}} (t - x_l)^m \bigg|_{t=x_1}^{t=x_j} \\
&= 0.
\end{align*}
Thus we have $(\ref{2int}) + (\ref{3int}) + (\ref{4int}) = 0$.
Similarly we can evaluate (\ref{5int}) to obtain
\begin{align*}
& \frac{1}{2}(\ref{3int}) + (\ref{4int}) \\
=& \int_{x_1}^{x_j} \ptl{}{t} \bigg[ t^k \ptl{}{t} \left( \prod_{i=1}^n (t - x_i)^m \right) \bigg] dt \\
=& t^k \sum_{i=1}^n (t - x_i)^{m-1} \prod_{\stackrel{l=1}{l \neq i}} (t - x_l)^m \bigg|_{t=x_1}^{t=x_j} \\
=& 0.
\end{align*}
Using $(\ref{2int}) + (\ref{3int}) + (\ref{4int}) = 0$ and
$\frac{1}{2}(\ref{3int}) + (\ref{4int}) = 0$, we obtain
$(\ref{2int}) = (\ref{4int})$. So by (\ref{firstint}) we have
\begin{align*}
L_m(Q_T^{k,m}) &= (\ref{4int}) \\
&= (\ref{2int}) \\
&= k(k - 1) Q_T^{k-2,m}.
\end{align*}
Thus we have proven the theorem for $m = 0$ and $m \geq 2$.  For $m
= 1$ similar logic works. We first compute
\begin{align*}
\sum_{i=1}^n \ptl{2}{x_i} \left( \int_{x_1}^{x_j} t^k \prod_{l=1}^n (t - x_l) dt \right) &= \sum_{i=1}^n \ptl{}{x_i} \left( - \int_{x_1}^{x_j} t^k \prod_{\stackrel{l=1}{l \neq i}}^n (t - x_l) dt \right) \\
&= - \left( \ptl{}{x_1} + \ptl{}{x_j} \right) \left( \int_{x_1}^{x_j} t^k \prod_{\stackrel{l=1}{l \neq i}}^n (t - x_l) dt \right) \\
&= x_1^k \left( \prod_{i=2}^n (x_1 - x_i) \right) -  x_j^k \left(
\prod_{\stackrel{i=1}{i \neq j}}^n (x_j -x_i) \right)
\end{align*}
and we can easily verify that this quantity is also equal to
\begin{align*}
- \int_{x_1}^{x_j} \ptl{2}{t} \left( t^k \prod_{i=1}^n (t - x_i)
\right) dt = -(\ref{1int}).
\end{align*}
With that in hand, we also compute
\begin{align*}
&\sum_{1 \le i < l \le n} \frac{1}{x_i - x_l} \left( \ptl{}{x_i} - \ptl{}{x_l} \right) \int_{x_1}^{x_j} t^k \prod_{l=1}^n (t - x_l) dt \\
=& \sum_{1 \le i < l \le n} \frac{1}{x_i - x_l} \int_{x_1}^{x_j} t^k \left( \prod_{\stackrel{p=1}{p \neq i}}^n (t - x_p) - \prod_{\stackrel{p=1}{p \neq l}}^n (t - x_p) \right) dt \\
=& \sum_{1 \le i < l \le n} \int_{x_1}^{x_j} t^k \left( \prod_{\stackrel{p=1}{p \neq i,l}}^n (t - x_p) \big[ \frac{(t - x_l) - (t - x_i)}{x_i - x_l} \big] \right) dt \\
=& \sum_{1 \le i < l \le n} \int_{x_1}^{x_j} t^k
\prod_{\stackrel{p=1}{p \neq i,l}}^n (t - x_p) dt
\end{align*}
Combining this with the following:
\begin{align*}
(\ref{4int}) &= \int_{x_1}^{x_j}  t^k \left( \ptl{2}{t} \prod_{i=1}^n (t - x_i) \right) dt \\
&= 2 \sum_{1 \le i < l \le n} \int_{x_1}^{x_j} t^k
\prod_{\stackrel{p=1}{p \neq i,l}}^n (t - x_p) dt
\end{align*}
shows that we have $L_1 Q_T^{k,1} = -(\ref{1int}) + (\ref{4int})$.
Further, we have
\begin{align*}
(\ref{5int}) &= \int_{x_1}^{x_j} \ptl{}{t} \bigg[ t^k \ptl{}{t} \left( \prod_{i=1}^n (t - x_i) \right) \bigg] dt \\
&= \int_{x_1}^{x_j} \ptl{}{t} \bigg[ t^k \sum_{i=1}^n  \prod_{\stackrel{l=1}{l \neq i}}^n (t - x_l) \bigg] dt \\
&= x_j^k \left( \prod_{\stackrel{i=1}{i \neq j}}^n (x_j -x_i) \right) - x_1^k \left( \prod_{i=2}^n (x_1 - x_i) \right) \\
&= (\ref{1int}).
\end{align*}
Hence we conclude
\begin{align*}
(\ref{1int}) &=(\ref{2int})+(\ref{3int})+(\ref{4int}) \\
&= (\ref{2int}) + ( -2(\ref{4int}) + 2(\ref{5int}) ) + (\ref{4int}) \\
&= (\ref{2int}) - (\ref{4int}) + 2(\ref{1int} ) \\
\Rightarrow (\ref{2int}) &= -(\ref{1int}) + (\ref{4int}) \\
&= L_1 Q_T^{k,1}
\end{align*} thus completing the proof.
\end{proof}

\section{Change of Basis Matrix for Quasiinvariants}
\label{Sec:ChangeOfBasis}

We now turn our attention to analyzing the relationship between the
$m$-quasiinvariants and the $(m+1)$-quasiinvariants.  In particular, recall
that ${\bf QI}_m \supset {\bf QI}_{m+1} \supset \Lambda_n$ for all
$m$, and so we can expand any basis for ${\bf QI}_{m+1}$ in terms
of a basis for ${\bf QI}_{m}$ over the ring $\Lambda_n$ of symmetric
functions.
Each of these bases has $n!$ elements, and thus we obtain a square
change of basis matrix. Since the only invertible symmetric
functions are the constants, any choice of bases for ${\bf QI}_{m}$
and ${\bf QI}_{m+1}$ will yield a change of basis matrix with the
same determinant up to a scalar multiple.  We in fact obtain the
following explicit formula for these determinants:

\begin{Thm} \label{Thm:Park} For all $n$ and $m$, any matrix
  expressing the expansion of a basis for ${\bf QI}_{m+1}$ in terms of
  a basis for $\qss$ with symmetric function coefficients will have a
  determinant equal to a scalar multiple of $(\Delta_n)^{n!}$, where $\Delta_n$
  denotes the Vandermonde determinant $\prod_{1 \leq i < j \leq n}
  (x_i-x_j).$
\end{Thm}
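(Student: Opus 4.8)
The plan is to reduce the statement to a computation of two invariants of the $\Lambda_n$-module structure: the determinant of the change of basis matrix, which a priori lives in the fraction field of $\Lambda_n$ but is forced to be a polynomial, and its degree. Since ${\bf R}$ is free over $\Lambda_n$ of rank $n!$, and since both $\qss$ and ${\bf QI}_{m+1}$ are free $\Lambda_n$-modules of rank $n!$ sitting inside ${\bf R}$, I would work with the chain of inclusions ${\bf QI}_{m+1} \hookrightarrow \qss \hookrightarrow {\bf R}$ and multiply the corresponding change of basis determinants. Writing $\det_m$ for (a representative of) the determinant expressing a homogeneous basis of $\qss$ in terms of a homogeneous basis of ${\bf R}$, the multiplicativity gives that the determinant in the theorem equals $\det_{m+1}/\det_m$ up to a nonzero scalar, so it suffices to show $\det_m = c_m \, (\Delta_n)^{m\binom{n}{2}\cdot(\text{something})}$; more precisely I expect $\det_m$ to be a scalar times $(\Delta_n)^{N_m}$ for an explicit exponent $N_m$, and then $N_{m+1}-N_m = n!$.

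The key steps, in order: (1) Fix homogeneous $\Lambda_n$-bases and observe that the change of basis determinant is homogeneous of degree equal to $\sum_i \deg(\text{basis elt of }\qss) - \sum_i \deg(\text{basis elt of }{\bf R})$; using the Hilbert series formula (\ref{eq:Hilb}) and the fact (from \cite{EG,FV2}) that each isotypic component $\gamma_T\qss^*$ affords the correct multiplicity, sum the degree shifts over all standard tableaux $T \in ST(n)$. The total degree shift from ${\bf R} = {\bf QI}_0$ to $\qss$ is $m \sum_{T}\bigl(\binom{n}{2} - content(\lambda(T))\bigr)$, and since $content$ depends only on the shape, this is $m \sum_{\lambda \vdash n} f_\lambda\bigl(\binom{n}{2} - content(\lambda)\bigr)$. (2) Show this determinant is divisible by a large power of $\Delta_n$: for any column transposition argument as in the proof of Theorem~\ref{Thm:Main}, elements of $\gamma_T\qss$ are divisible by $V_T^{2m+1}$, and specializing $x_a = x_b$ forces rank drops in the matrix, so $(x_a-x_b)$ divides $\det_m$ to high order; collecting over all pairs $(a,b)$ shows $(\Delta_n)^{E_m} \mid \det_m$ for a suitable $E_m$. (3) Match the degree count from (1) with $\binom{n}{2}\cdot E_m$ to conclude $\det_m$ is exactly a scalar multiple of $(\Delta_n)^{E_m}$ — no room is left for extra factors. (4) Finally compute $E_{m+1} - E_m$ and check it equals $n!$, equivalently that the degree shift from $\qss$ to ${\bf QI}_{m+1}$ is $\binom{n}{2}\cdot n! = \sum_{\lambda}f_\lambda\bigl(\binom{n}{2} - content(\lambda)\bigr)$, which follows from a short identity on the $content$ statistic (the key numeric fact being $\sum_{\lambda \vdash n} f_\lambda\, content(\lambda) = 0$, so the shift per unit increase in $m$ is $\binom{n}{2}\sum_\lambda f_\lambda = \binom{n}{2}\, n!$).

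The main obstacle I anticipate is step (2)–(3): getting a lower bound on the power of $\Delta_n$ dividing $\det_m$ that is sharp enough to match the degree count exactly, rather than merely a bound. Counting the order of vanishing of $\det_m$ along the diagonal $x_a = x_b$ requires understanding precisely how many basis elements of $\qss$ become divisible by $(x_a-x_b)$ to each order — this is controlled by the $V_T^{2m+1}$ divisibility, but one must organize the standard tableaux by how $a,b$ sit relative to columns, and this is exactly where the $content$ statistic re-enters combinatorially. If the specialization argument only yields divisibility rather than the exact order, the degree-matching in step (3) closes the gap, so the logical structure is robust; the bookkeeping in step (2) is the delicate part. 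An alternative to sidestep some of this would be to prove the $m \to m+1$ statement directly by exhibiting, for one convenient choice of bases (e.g. built from the $\gamma_T$ decomposition and, for the $[n-1,1]$ component, the explicit $Q_T^{k,m}$), that the transition matrix is block-triangular with Vandermonde-power blocks — but that requires explicit bases for all shapes, which the paper does not have, so the Hilbert-series-plus-divisibility route above is the one I would pursue.
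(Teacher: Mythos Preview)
Your overall strategy --- compute the degree of the determinant from the Hilbert series and match it against a divisibility bound to pin it down as an exact power of $\Delta_n$ --- is precisely the paper's. The substantive difference is in how divisibility is obtained. You propose to compare $\qss$ with ${\bf R}$ and argue that $\det_m$ vanishes to high order along each diagonal $x_a=x_b$ via a specialization/rank-drop analysis; the paper instead proves the one-line lemma $\Delta_n^2 \cdot \qss \subset {\bf QI}_{m+1}$ (since $\Delta_n^2$ is symmetric, $(1-(i,j))(\Delta_n^2 P) = \Delta_n^2\,(1-(i,j))P$ supplies two extra factors of $(x_i-x_j)$), and then sandwiches $\Delta_n^2\,\qss \subset {\bf QI}_{m+1} \subset \qss$. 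Taking $\{\Delta_n^2\beta_{S,T}\}$, $\{\alpha_{S,T}\}$, $\{\beta_{S,T}\}$ as bases for these three modules, the product of the two transition matrices is $\Delta_n^2\cdot I_{n!}$, so $\det(B)\mid \Delta_n^{2(n!)}$ immediately --- no specialization argument needed. This sidesteps exactly the step~(2)--(3) obstacle you flagged; your route may be salvageable, but the sandwich lemma makes it unnecessary.

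There is also a counting slip in your degree computation. The rank of $\qss$ over $\Lambda_n$ is $n! = \sum_\lambda f_\lambda^2$, so a homogeneous basis is indexed by \emph{pairs} $(S,T)$ of standard tableaux of the same shape (this is visible already in the $f_\lambda$ coefficient in the Hilbert series~(\ref{eq:Hilb})). The degree shift per unit of $m$ is therefore $\sum_\lambda f_\lambda^2\bigl(\binom{n}{2} - content(\lambda)\bigr)$, not $\sum_\lambda f_\lambda\bigl(\binom{n}{2} - content(\lambda)\bigr)$. The conjugation argument $f_\lambda=f_{\lambda'}$, $content(\lambda)=-content(\lambda')$ then kills the content term and leaves $\binom{n}{2}\sum_\lambda f_\lambda^2 = \binom{n}{2}\,n!$, which is $\deg(\Delta_n^{n!})$ as required. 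Your assertion $\sum_\lambda f_\lambda = n!$ is false already at $n=3$ (where the sum is $1+2+1=4$), so step~(4) as written does not close.
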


\begin{Lem} \label{Lem:subring}
The ring $\Delta_n^2\cdot {\bf QI}_m$ is a subring of ${\bf
QI}_{m+1}$.
\end{Lem}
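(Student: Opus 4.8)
The plan is to split the statement into two parts: the set-theoretic containment $\Delta_n^2\cdot{\bf QI}_m\subseteq{\bf QI}_{m+1}$, which is the substantive point, and closure of $\Delta_n^2\cdot{\bf QI}_m$ under the ring operations, which is purely formal. I would establish the containment first.

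For the containment, let $P\in{\bf QI}_m$ and fix a transposition $(i,j)$. Since $\Delta_n^2$ is a symmetric polynomial, I would invoke Proposition~\ref{fpq} with the group algebra element $f=1-(i,j)$ and the symmetric polynomial $\Delta_n^2$ to get $\bigl(1-(i,j)\bigr)\bigl(\Delta_n^2 P\bigr)=\Delta_n^2\bigl(1-(i,j)\bigr)P$. Now $m$-quasiinvariance of $P$ gives $(x_i-x_j)^{2m+1}\mid\bigl(1-(i,j)\bigr)P$, and the one bookkeeping point to be careful about is that the linear factor $(x_i-x_j)$ occurs in $\Delta_n=\prod_{1\le k<l\le n}(x_k-x_l)$ with multiplicity exactly one (up to sign), so $(x_i-x_j)^2\mid\Delta_n^2$. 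Multiplying the two divisibilities yields $(x_i-x_j)^{2m+3}\mid\bigl(1-(i,j)\bigr)\bigl(\Delta_n^2 P\bigr)$. As $(i,j)$ was arbitrary, $\Delta_n^2 P$ is $(m+1)$-quasiinvariant, which proves the containment.

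For closure under multiplication, given $P,Q\in{\bf QI}_m$ I would write $(\Delta_n^2 P)(\Delta_n^2 Q)=\Delta_n^2(\Delta_n^2 PQ)$; since $\Delta_n^2\in\Lambda_n\subseteq{\bf QI}_m$ and ${\bf QI}_m$ is a ring, $\Delta_n^2 PQ\in{\bf QI}_m$, so the product lies in $\Delta_n^2\cdot{\bf QI}_m$. Closure under addition is immediate. Combined with the containment of the previous paragraph, this shows $\Delta_n^2\cdot{\bf QI}_m$ is a subring of ${\bf QI}_{m+1}$.

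I do not expect a genuine obstacle here: the whole argument rests on the elementary observation that squaring the Vandermonde determinant contributes precisely the two extra factors of $(x_i-x_j)$ needed to promote the exponent from $2m+1$ to $2m+3$, together with the fact that multiplication by a symmetric polynomial commutes with the $S_n$-action (Proposition~\ref{fpq}). The only place to stay alert is the multiplicity count for $\Delta_n$, and the trivial—but worth stating—remark that a sign discrepancy between $(x_i-x_j)$ and $(x_j-x_i)$ is irrelevant to divisibility.
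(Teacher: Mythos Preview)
Your argument is correct and follows essentially the same route as the paper: both use Proposition~\ref{fpq} to pull the symmetric factor $\Delta_n^2$ through $1-(i,j)$, then combine the $(x_i-x_j)^{2m+1}$ from $m$-quasiinvariance with the $(x_i-x_j)^2$ inside $\Delta_n^2$ to reach exponent $2m+3$. You additionally spell out closure of $\Delta_n^2\cdot{\bf QI}_m$ under the ring operations, which the paper's proof leaves implicit (and which is not actually needed for the application in Theorem~\ref{Thm:Park}, where only the module containment is used).
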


\begin{proof}
Since $\Delta_n$ is antisymmetric, $\Delta_n^2$ is a symmetric
function and by Proposition \ref{fpq}, we have for polynomial $P \in
{\bf QI}_m$, 
$$(1-(i,j))(\Delta_n^2 P) = \Delta_n^2 ((1-(i,j))P) =
\Delta_n^2(x_i-x_j)^{2m+1}P^\prime$$
for all $1 \leq i < j \leq n.$ In particular, for all $1 \leq i < j
\leq n$, the polynomial $(1-(i,j))(\Delta_n^2 P)$ is divisible by
$(x_i-x_j)^{2m+3}$ and thus $\Delta_n^2P$ is $(m+1)$-quasiinvariant.
\end{proof}

\begin{proof} [Proof of Theorem \ref{Thm:Park}.]
  We begin picking a basis (over $\Lambda_n$) of homogeneous
  polynomials $\{ \beta_{S,T} \}$ for $\qss$ where $S$ and $T$ range
  over all pairs of standard tableaux of the same shape and the degree
  of $\beta_{S,T}$ is $m \left( \binom{n}{2} -
  content(\lambda(T)) \right) + cocharge(T)$. We know this is possible
  by the Hilbert series stated in (\ref{eq:Hilb}).  We similarly pick a
  basis $\{ \alpha_{S,T} \}$ for ${\bf QI}_{m+1}$. Now, by
  Lemma~\ref{Lem:subring} we have the following containments:
  $$ \Delta_n^2\cdot{\bf QI}_m  \subset {\bf QI}_{m+1}  \subset {\bf QI}_m.$$
  We label these modules $M_1, M_2,$ and $M_3$ respectively and use
  the basis $\{ \Delta_n^2 \beta_{S,T} \}$ for $M_1$.  We set $A$ to
  be the change of basis matrix between $M_1$ and $M_2$ and $B$ to be
  the matrix from $M_2$ to $M_3$.  We immediately obtain that $AB =
  diag(\Delta_n^2)$.  Thus, in particular, $\det(B)$ divides
  $\Delta_n^{2(n!)}$.

  We now consider the degree of an arbitrary non-zero term of
  $\det(B)$. By considering the difference in degrees of all basis
  elements, we must have
  \begin{align*}
        degree(\det(B)) & \\
        &=\left( \sum_{T \in ST(n)}^{} f_{\lambda(T)} (m+1) \left(
        \binom{n}{2} - content(\lambda(T)) \right) + cocharge(T) \right)
        \\ &~~ -
        \left( \sum_{T' \in ST(n)} f_{\lambda(T')}m \left( 
        \binom{n}{2} - content(\lambda(T')) \right) + cocharge(T') \right)
        \\
        &= \sum_{\lambda \vdash n}^{} f_\lambda^2 \left(
        \binom{n}{2} - content(\lambda) \right)
  \end{align*}
  However, it is easy to see that $f_\lambda = f_{\lambda'}$ and
  $content(\lambda) = -content(\lambda')$, where $\lambda'$ is the conjugate 
(or transpose) of partition $\lambda$. Hence we have
  $$ degree(\det(B)) = \sum_{\lambda \vdash n}^{} f_\lambda^2
  \binom{n}{2} = \binom{n}{2} n!. $$
  Since $\det(B)$ is a symmetric function of degree $\binom{n}{2} n!$
  which divides $\Delta_n^{2(n!)}$, and $\Delta_n^2$ has no
  nontrivial symmetric function factors, we conclude that $\det(B)$
  equals $\Delta_n^{n!}$, up to a scalar multiple.
\end{proof}

\section{Conclusions and Open Problems} \label{Sec:Conclusions}

In this paper, we provided a decomposition of the ring of
$m$-quasiinvariants into isotypic components and gave two easy
criteria for characterizing such elements.  One application of this
new characterization was an explicit description of a basis for the
isotypic component corresponding to shape $[n-1,1]$.  In particular
such basis elements can be written as either integrals or
algebraically using polynomials with coefficients given as products
of binomial coefficients.

One natural extension of this research involves further analysis of
the representation theoretic aspects of $m$-quasiinvariants.  In
particular can one re-characterize quasiinvariants for other Coxeter
groups using analogous criteria.  Another direction is the
computation of explicit bases for more isotypic components.  It
would be even better if the operator $L_m$ respected these bases in
a similar manner.

\vspace{2em}

\noindent {\bf Acknowledgements.}

The authors are grateful for Adriano Garsia's guidance in this
project as well as the support of the NSF.  We also would like to
thank Vic Reiner for conversations which helped motivate the
analysis of section $8$.

\end{document}